\date{}
\newtheorem{theorem}{Theorem}[section]
\newtheorem{proposition}{Proposition}[section]
\newtheorem{lemma}{Lemma}[section]
\theoremstyle{definition}
\newtheorem{definition}{Definition}[section]
\numberwithin{equation}{section}
\newcommand{\R}{\mathbb{R}}
\newcommand{\N}{\mathbb{N}}
\newcommand{\Z}{\mathbb{Z}}
\newcommand{\Q}{\mathbb{Q}}
\newcommand{\T}{\mathbb{T}}
\newcommand{\D}{\mathcal{D}}
\newcommand{\esslim}{\operatornamewithlimits{ess\,lim}}
\newcommand{\esslimsup}{\operatornamewithlimits{ess\,lim\,sup}}
\newcommand{\sgn}{\operatorname{sign}}
\newcommand{\meas}{\operatorname{meas}}
\newcommand{\Cl}{\operatorname{Cl}}
\newcommand{\Int}{\operatorname{Int}}
\newcommand{\pr}{\operatorname{pr}}
\newcommand{\supp}{\operatorname{supp}}
\newcommand{\const}{\mathrm{const}}
\renewcommand{\div}{\operatorname{div}}
\begin{document}

\title{On decay of entropy solutions to degenerate nonlinear parabolic equations with perturbed periodic initial data}
\author{Evgeny Yu. Panov}

\maketitle

\begin{abstract}
Under a precise nonlinearity-diffusivity assumption we establish the decay of entropy solutions of a degenerate nonlinear parabolic equation with initial data being a sum of periodic function and a function vanishing at infinity (in the sense of measure).
\end{abstract}

%

\section{Introduction}
In the half-space $\Pi=\R_+\times\R^n$, $\R_+=(0,+\infty)$, we consider the nonlinear parabolic equation
\begin{equation}\label{1}
u_t+\div_x(\varphi(u)-a(u)\nabla u)=0,
\end{equation}
where the flux vector $\varphi(u)=(\varphi_1(u),\ldots,\varphi_n(u))$ is merely continuous: $\varphi_i(u)\in C(\R)$, $i=1,\ldots,n$, and the diffusion matrix $a(u)=(a_{ij}(u))_{i,j=1}^n$ is Lebesgue measurable and bounded:
$a_{ij}(u)\in L^\infty(\R)$, $i,j=1,\ldots,n$. We also assume that the matrix ${a(u)\ge 0}$
(nonnegative definite). This matrix may have nontrivial kernel. Hence (\ref{1}) is a degenerate
(hyperbolic-parabolic) equation. In particular case $a\equiv 0$ it reduces to a first order conservation law
 \begin{equation}\label{con}
u_t+\div_x \varphi(u)=0.
\end{equation}
Equation (\ref{1}) can be written (at least formally) in the conservative form
\begin{equation}\label{1c}
u_t+\div_x\varphi(u)-D^2_x\cdot A(u)=0,
\end{equation}
where the matrix $A(u)$ is a primitive of the matrix $a(u)$, $A'(u)=a(u)$, and the operator $D^2_x$ is the second order  ``divergence''
$$
D^2_x\cdot A(u)\doteq\sum_{i,j=1}^n \frac{\partial^2}{\partial x_i\partial x_j}A_{ij}(u), \quad u=u(t,x).
$$
Equation (\ref{1}) is endowed with the initial condition
\begin{equation}\label{2}
u(0,x)=u_0(x)\in L^\infty(\R^n).
\end{equation}
Let $g(u)\in BV_{loc}(\R)$ be a function of bounded variation on any segment in $\R$.
We will need the bounded linear operator $T_g:C(\R)/C\to C(\R)/C$, where $C$ is the space of constants. This operator is defined up to an additive constant by the relation
\begin{equation}\label{efl}
T_g(f)(u)=g(u-)f(u)-\int_0^u f(s)dg(s),
\end{equation}
where
$\displaystyle g(u-)=\lim_{v\to u-} g(v)$ is the left limit of $g$ at the point $u$, and the integral in (\ref{efl}) is understood in accordance with the formula
$$
\int_0^u f(s)dg(s)=\sgn u\int_{J(u)} f(s)dg(s),
$$
where $\sgn u=1$,
$J(u)$ is the interval $[0,u)$ if $u>0$, and $\sgn u=-1$, $J(u)=[u,0)$ if $u\le 0$.
Observe that $T_g(f)(u)$ is continuous even in the case of discontinuous $g(u)$. For instance, if $g(u)=\sgn(u-k)$ then
$T_g(f)(u)=\sgn(u-k)(f(u)-f(k))$. Notice also that for $f\in C^1(\R)$ the operator $T_g$ is uniquely determined by the identity $T_g(f)'(u)=g(u)f'(u)$ (in $\D'(\R)$).

We fix some representation of the diffusion matrix $a(u)$ in the form $a(u)=b^\top(u)b(u)$, where $b(u)=(b_{ij}(u))$, $i=1,\ldots,r$, $j=1,\ldots,n$ is a $r\times n$ matrix-valued function (a square root of $a(u)$) with measurable and bounded entries,  $b_{ij}(u)\in L^\infty(\R)$.
We recall the notion of entropy solution of the Cauchy problem (\ref{1}), (\ref{2}) introduced in \cite{ChPer1}.

\begin{definition}\label{def1}
A function $u=u(t,x)\in L^\infty(\Pi)$ is called an entropy solution (e.s. for short) of (\ref{1}), (\ref{2}) if
the following conditions hold:

(i) for each $i=1,\ldots,r$ the distributions
\begin{equation}\label{pr}
\div_x B_i(u(t,x))\in L^2_{loc}(\Pi),
\end{equation}
where vectors $B_i(u)=(B_{i1}(u),\ldots,B_{in}(u))\in C(\R,\R^n)$, and $B_{ij}'(u)=b_{ij}(u)$, $i=1,\ldots,r$, $j=1,\ldots,n$;

(ii) for every $g(u)\in C^1(\R)$, $i=1,\ldots,r$
\begin{equation}\label{cr}
\div_x T_g(B_i)(u(t,x))=g(u(t,x))\div_x B_i(u(t,x)) \ \mbox{ in } \D'(\Pi);
\end{equation}

(iii) for any convex function $\eta(u)\in C^2(\R)$
\begin{equation}\label{entr}
\eta(u)_t+\div_x T_{\eta'}(\varphi)(u)-D^2_x\cdot T_{\eta'}(A)(u)+\eta''(u)\sum_{i=1}^r (\div_x B_i(u))^2\le 0 \ \mbox{ in } \D'(\Pi);
\end{equation}

(iv) $\displaystyle\esslim_{t\to 0} u(t,\cdot)=u_0$ in $L^1_{loc}(\R^n)$.
\end{definition}

In the isotropic case when the diffusion matrix is scalar the definition can be considerably simplified and was introduced earlier by J.~Carrillo in \cite{Car}.

Relation (\ref{entr}) means that for any non-negative test function $f=f(t,x)\in C_0^\infty(\Pi)$
\begin{equation}\label{entrI}
\int_\Pi \bigl[\eta(u)f_t+T_{\eta'}(\varphi)(u)\cdot\nabla_x f+T_{\eta'}(A)(u)\cdot D^2_x f-f\eta''(u)\sum_{i=1}^r (\div_x B_i(u))^2\bigr]dtdx\ge 0,
\end{equation}
where $D^2_x f$ is the symmetric matrix of second order derivatives of $f$, and "$\cdot$" denotes the standard scalar multiplications of vectors or matrices (in particular, $A\cdot B=\mathop{\rm Tr} A^\top B$ for matrices $A,B$).

In the case of conservation laws (\ref{con}) Definition~\ref{def1} reduces to the known definition of entropy solutions
in the sense of S.\,N.~Kruzhkov \cite{Kr}. Taking in (\ref{entr}) $\eta(u)=\pm u$, we deduce that
$$
u_t+\div_x \varphi(u)-D^2_x\cdot A(u)=0 \ \mbox{ in } \D'(\Pi),
$$
that is, an e.s. $u$ is a weak solution of (\ref{1c}).

It is known that e.s. of (\ref{1}), (\ref{2}) always exists but in the case of only continuous flux may be nonunique, for conservations laws (\ref{con}) the corresponding  examples can be found in \cite{KrPa1,KrPa2}. Nevertheless, if initial function is periodic, the uniqueness holds:
an e.s. of (\ref{1}), (\ref{2}) is unique and space-periodic, see \cite[Theorem~1.3]{PaJDE2}. In general case there always exists the unique largest and smallest e.s., see \cite[Theorem~1.1]{PaJDE2}.

\section{Formulation of the results}
In the present paper we study the long time decay property of e.s. in the case when initial data is a perturbed periodic function. More precisely, we assume that the initial function $u_0(x)=p(x)+v(x)$, where $p(x)$ is periodic while $v(x)\in L_0^\infty(\R^n)$, where the space $L_0^\infty(\R^n)$ consists of functions $v(x)\in L^\infty(\R^n)$ such that
\begin{equation}\label{van}
 \forall\lambda>0 \quad \meas\{ \ x\in\R^n: \ |v(x)|>\lambda \ \}<+\infty
\end{equation}
(here we denote by $\meas$ the Lebesgue measure on $\R^n$). Evidently, the space $L_0^\infty(\R^n)$ contains functions vanishing at infinity as well as functions from the spaces $L^q(\R^n)\cap L^\infty(\R^n)$, $q>0$.
Observe that the functions $p,v$ are uniquely defined (up to equality on a set of full measure) by the function $u_0$.
Let
\begin{equation}\label{per}
G=\{ \ e\in\R^n \ | \ p(x+e)=p(x) \ \mbox{ almost everywhere in } \R^n \ \}
\end{equation}
be the group of periods of $p$, it is not necessarily a lattice because $p(x)$ may be constant in some directions. For example, if $p\equiv\const$ then $G=\R^n$. The periodicity of $p$ means that the linear hull of $G$ coincides with $\R^n$, that is, there is a basis of periods of $p$. Denote by $H$ the maximal linear subspace contained in $G$. The dual lattice
$$
G'=\{ \ \xi\in\R^n \ | \ \xi\cdot e\in\Z \ \forall e\in G \ \}
$$
is indeed a lattice in the orthogonal complement $H^\perp$ of the space $H$ (we will prove this simple statement in Lemma~\ref{lem2} below). Observe also that $G'=L_0'$ in $H^\perp$, where
$L_0=G\cap H^\perp$, so that $G=H\oplus L_0$. It is rather well-known (at least for continuous periodic functions) that $L_0$ is a lattice in $H^\perp$. The case of measurable periodic functions requires some little modifications and, for the sake of completeness, we put the proof of this fact in Lemma~\ref{lem2}. Notice that we use more general notion of period contained in (\ref{per}). For the standard notion $p(x+e)\equiv p(x)$ (where the words ``almost everywhere'' are omitted) the group $G$ may have more complicate structure. For example, the group of periods of the Dirichlet function on $\R$ is a set of rationals $\Q$, which is not a lattice in $\R$. We introduce the torus $\T^d=\R^n/G=H^\perp/L_0$
of dimension $d=\dim H^\perp=n-\dim H$ equipped with the normalized Lebesgue measure $dy$. The periodic function $p$ can be considered as a function on this torus $\T^d$: $p=p(y)$. Let
$$m=\int_{\T^d} p(y)dy$$ be the mean value of this function. Clearly, this value coincides with the mean value of the initial data:
$$
m=\lim_{R\to\infty}\frac{1}{|B_R|}\int_{B_R} u_0(x)dx,
$$
where $B_R$ is the ball $|x|<R$, and $|B_R|\doteq\meas B_R$ is its Lebesgue measure. The latter follows from the fact that functions $v(x)$ from $L_0^\infty(\R^n)$ always have zero mean value. More precisely,
\begin{equation}\label{mean0}
\lim_{|A|\to\infty}\frac{1}{|A|}\int_A |v(x)|dx=0,
\end{equation}
where $A$ runs over Lebesgue measurable sets of finite measure $|A|$. In fact, let $\varepsilon>0$, $E=\{ \ x\in\R^n: \ |v(x)|>\varepsilon \ \}$. Then $p=\meas E<+\infty$.
Obviously
$$
\int_A |v(x)|dx=\int_E |v(x)|dx+\int_{A\setminus E} |v(x)|dx\le p\|v\|_\infty+\varepsilon |A|.
$$
This implies that
$$
\frac{1}{|A|}\int_A |v(x)|dx\le \frac{p\|v\|_\infty}{|A|}+\varepsilon\mathop{\to}_{|A|\to\infty}\varepsilon.
$$
Hence,
$$
\limsup_{|A|\to\infty}\frac{1}{|A|}\int_A |v(x)|dx\le\varepsilon,
$$
and since $\varepsilon>0$ is arbitrary, we conclude that
$$
\lim_{|A|\to\infty}\frac{1}{|A|}\int_A |v(x)|dx=0,
$$
as was to be proved.

We will study the long time decay property of e.s. with respect to the following shift-invariant Stepanov norm on $L^\infty(\R^n)$:
 \begin{equation}\label{normX}
\|u\|_X=\sup_{y\in\R^n} \int_{|x-y|<1} |u(x)|dx
\end{equation}
(where we denote by $|z|$ the Euclidean norm of a finite-dimensional vector $z$).
As was demonstrated in \cite{PaSIMA2}, this norm is equivalent to each of more general norms
\begin{equation}\label{normV}
\|u\|_V=\sup_{y\in\R^n} \int_{y+V} |u(x)|dx,
\end{equation}
where $V$ is any bounded open set in $\R^n$ (the original norm $\|\cdot\|_X$ corresponds to the unit ball $|x|<1$). For the sake of completeness we repeat the proof of this result in Lemma~\ref{equ} below. Obviously, norm (\ref{normX}) generates the stronger topology than one of $L^1_{loc}(\R^n)$.

We denote by $F$ the closed set of points $u\in\R$ such that the flux components $\varphi(u)\cdot\xi$ are not affine on any vicinity of $u$ where the diffusion coefficients $a(u)\xi\cdot\xi=0$ (almost everywhere in this vicinity) for all $\xi\in G'$, $\xi\not=0$. In the case when $G=H=\R^n$ (and nonzero vectors $\xi\in G'$ do not exist), we define $F$ as the set of $u$ such that there is no such vicinity of $u$ where the entire vector $\varphi(u)$ is affine while the entire diffusion matrix $a(u)=0$ (a.e. in this vicinity).
Our main result is the following decay property.

\begin{theorem}\label{thM}
Assume that the following {\bf nonlinearity-diffusivity} condition is satisfied: for all $a<m$, $b>m$ the intervals $(a,m)$, $(m,b)$ intersect with $F$: $(a,m)\cap F\not=\emptyset$, $(m,b)\cap F\not=\emptyset$. Suppose that $u(t,x)$ is an e.s. of (\ref{1}), (\ref{2}). Then
\begin{equation}\label{dec}
\esslim_{t\to+\infty}\|u(t,\cdot)-m\|_X=0.
\end{equation}
\end{theorem}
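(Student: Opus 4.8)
\emph{Proof proposal.} The plan is to measure the given e.s.\ $u$ against the unique periodic e.s.\ $\bar u$ emanating from the periodic part $p$, to check that the deviation stays a perturbation which vanishes at infinity, and then to identify the long-time limit as the constant $m$ by a compactness argument driven by the nonlinearity-diffusivity hypothesis. Since an e.s.\ need not be unique, I would work with the extremal e.s.\ of \cite[Theorem~1.1]{PaJDE2} and use order-preservation to sandwich $u$ wherever a pointwise bound is convenient.

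First I would assemble the soft ingredients. By \cite[Theorem~1.3]{PaJDE2} the datum $p$ produces a unique $G$-periodic e.s.\ $\bar u$; integrating (\ref{1c}) over a period cell, where every divergence term drops out, shows that its mean is conserved and equals $m$. The maximum principle gives $\|u(t,\cdot)\|_\infty\le\|u_0\|_\infty$, while the Kruzhkov--Kato comparison structure for (\ref{1}) yields both the ordering $u_0\le w_0\Rightarrow u\le w$ for extremal e.s.\ and the contraction $\|u(t,\cdot)-\bar u(t,\cdot)\|_X\le\|v\|_X$; here I pass from the local $L^1$ Kato estimate to the shift-invariant norm through the equivalence proved in Lemma~\ref{equ}. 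The accompanying Crandall--Tartar inequality $\int(z(t,\cdot)-k)^+dx\le\int(v-k)^+dx$ for $z=u-\bar u$ bounds every super-level measure of $z(t,\cdot)$ uniformly in $t$, so $z$ stays in $L_0^\infty(\R^n)$ and never develops mass at infinity.

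The analytic core is to pin the long-time limit to $m$. Along a sequence $t_k\to+\infty$ I would consider the time-shifted solutions $u_k(t,x)=u(t_k+t,x)$, which are e.s.\ with uniformly bounded data. The nonlinearity-diffusivity hypothesis enters through a compensated-compactness argument: feeding the reduction identities (\ref{cr}) and the entropy inequality (\ref{entr}) into an $H$-measure relation and localizing it at the periods $\xi\in G'$, exactly as in the definition of $F$, the fact that $F$ meets every interval $(a,m)$ and $(m,b)$ shows that at each value $u\ne m$ some $\xi\in G'$ supplies either a genuine curvature of $\varphi(u)\cdot\xi$ or a nondegenerate diffusion $a(u)\xi\cdot\xi>0$. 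This rules out sustained microstructure, yields strong $L^1_{loc}(\Pi)$ precompactness of the shifts, and forces every limit to be constant; the conserved mean then fixes that constant to be $m$. Because $z=u-\bar u\in L_0^\infty$, its finite-measure high-level sets are dispersed by the same mechanism and leave no trace in the limit.

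Converting this to the Stepanov norm is then routine: strong $L^1_{loc}$ convergence of the shifts together with the uniform vanishing at infinity of $z$ upgrades, via Lemma~\ref{equ}, to convergence in $\|\cdot\|_X$, which is (\ref{dec}). The main obstacle, and the heart of the matter, is the localization step---showing that no nonconstant oscillation or boundary layer at a value $u\ne m$ can persist, using only that $F$ accumulates at $m$ along the lattice directions $G'$. It is delicate precisely because the perturbation is not periodic: for an affine flux with vanishing diffusion a finite-mass bump would merely translate and never decay, so the argument must genuinely extract spreading from the curvature and diffusivity recorded in $F$ in order to annihilate both the periodic oscillation and the nonperiodic perturbation at once.
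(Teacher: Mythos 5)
Your proposal has a genuine gap at its analytic core. The decisive step --- ``feeding (\ref{cr}) and (\ref{entr}) into an $H$-measure relation \dots rules out sustained microstructure, yields strong $L^1_{loc}$ precompactness of the shifts, and forces every limit to be constant'' --- is asserted rather than proved, and the assertion rests on a misreading of the hypothesis. The nonlinearity--diffusivity condition only says that $F$ meets every interval $(a,m)$ and $(m,b)$, i.e.\ that $F$ accumulates at $m$ from both sides; it does \emph{not} say that ``at each value $u\ne m$ some $\xi\in G'$ supplies either a genuine curvature of $\varphi(u)\cdot\xi$ or a nondegenerate diffusion.'' Away from $m$ the flux may be affine and the diffusion may vanish on whole intervals, so a localization-at-$\xi$ argument cannot kill oscillations or travelling bumps supported at those values; you yourself point out that a finite-mass bump in a linearly degenerate range merely translates. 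Your proposal never explains how the curvature/diffusivity recorded only \emph{near} $m$ reaches those values, nor how the limit is identified as $m$ for a non-integrable, non-periodic perturbation (the ``conserved mean'' is not a well-defined functional on $L_0^\infty$ and the $X$-norm does not see it). Several auxiliary claims are also unsubstantiated: the $X$-norm contraction $\|u(t,\cdot)-\bar u(t,\cdot)\|_X\le\|v\|_X$ does not follow from the local Kato inequality here (the flux is merely continuous, there is no finite speed of propagation), and the paper only proves $L^1$-contraction for the extremal solutions.

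The paper's actual proof is elementary and entirely different: it is a sandwich argument that black-boxes all the $H$-measure machinery inside the already-known \emph{periodic} decay theorem. Given $\alpha^\pm\in F$ with $\alpha^-<m<\alpha^+$, one forms the $rG$-periodic envelopes $v_r^\pm(x)=\sup_{e\in G}v(x+re)$ and $\inf_{e\in G}v(x+re)$; since $v\in L_0^\infty$, their mean values over the cell $P_r$ tend to $0$ as $r\to\infty$, so after a constant shift one obtains periodic data $u_0^\pm$ with means exactly $\alpha^\pm$ squeezing $u_0$. The periodic decay theorem applies to the corresponding periodic e.s.\ (because $\alpha^\pm\in F$), and comparison gives $\esslimsup_t\|u(t,\cdot)-m\|_X\le C(\alpha^+-\alpha^-)$, which is made arbitrarily small by the density of $F$ at $m$. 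This is precisely how the nondegeneracy ``only near $m$'' is exploited and how the translating-bump obstruction is circumvented; the general group $G$ and general $v\in L_0^\infty$ are then handled by a dimension reduction along $H$, an $L^1$-contraction approximation by compactly supported perturbations, and a final truncation $v_\pm=\max(v\mp\delta,0)$, $\min(v\pm\delta,0)$. To repair your argument you would essentially have to reinvent this comparison scheme or prove a genuinely new localization principle valid for non-periodic perturbations.
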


The condition $H=\R^n$ means that the function $p(x)$ is constant, $p\equiv m$. In this case, the requirement of Theorem~\ref{thM} reduces to the condition that on any semivicinity
$(a,m)$, $(m,b)$ of the mean $m$ either the flux vector $\varphi(u)$ is not affine or the diffusion matrix $a(u)\not\equiv 0$. When $p\equiv m=0$, Theorem~\ref{thM} was proved in \cite[Theorem~1.4]{PaJDE2} (and in \cite{PaSIMA2} for conservation laws (\ref{con})). The case of arbitrary $m$
reduces to the case $m=0$ by the change $u\to u-m$, $\varphi(u)\to\varphi(u+m)$, $a(u)=a(u+m)$. Thus, we may suppose in the sequel that
$p\not\equiv\const$ and therefore the lattice $G'$ is not trivial.

Remark that the nonlinearity-diffusivity requirement in Theorem~\ref{thM} implies that $m\in F$ because of closeness of this set. Generally, under this weaker condition $m\in F$ the decay property fails, cf. section~\ref{ex} below. But,
in periodic case $v\equiv 0$, the decay property (\ref{dec}) holds under the weaker condition $m\in F$,
that is,
\begin{align}\label{gn}
\forall\xi\in G', \xi\not=0 \mbox{ either the flux components } \varphi(u)\cdot\xi \nonumber \mbox{ are not affine or} \\  \mbox{the diffusion coefficients } a(u)\xi\cdot\xi\not\equiv 0 \mbox{ on any vicinity of } m.
\end{align}
In the standard case when $G$ is a lattice (that is, when $\dim H=0$) it was proved in \cite[Theorem~1.1]{PaJDE1}, see also earlier paper \cite{ChPer2}, where the decay property was established under a more restrictive nonlinearity-diffusivity assumption. The general case of arbitrary $H$ easily reduces to the case $\dim H=0$. We provide the details in the following theorem.

\begin{theorem}\label{thPD}
Suppose that the initial function $u_0=p(x)$ is periodic with a group of periods $G$, and condition (\ref{gn}) is satisfied. Then the e.s. $u=u(t,x)$ of problem (\ref{1}), (\ref{2}) exhibits the decay property
\begin{equation}\label{decp}
\esslim_{t\to+\infty} u(t,\cdot)=m=\int_{\T^d} u_0(x)dx \ \mbox{ in } L^1(\T^d).
\end{equation}
\end{theorem}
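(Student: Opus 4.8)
The plan is to reduce to the lattice case $\dim H = 0$, which is handled in \cite[Theorem~1.1]{PaJDE1}. Fix orthonormal coordinates adapted to the splitting $\R^n = H^\perp \oplus H$ and write $x = (x',x'')$ with $x' \in H^\perp \cong \R^d$ and $x'' \in H$. First I would show that the e.s. $u$ is independent of $x''$. Since $u_0 = p$ is periodic with group $G \supseteq H$, for each $e \in H$ the translate $(t,x) \mapsto u(t,x+e)$ is again an e.s. of (\ref{1}) --- the coefficients $\varphi,a$ do not depend on $x$, and Definition~\ref{def1} is translation-covariant --- with initial data $u_0(\cdot+e) = u_0$. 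By the uniqueness of the periodic e.s. \cite[Theorem~1.3]{PaJDE2} this forces $u(t,x+e) = u(t,x)$ a.e., so $u$ is $H$-invariant and we may write $u(t,x) = \tilde u(t,x')$.

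Next I would check that $\tilde u$ is an e.s. on $\R_+ \times H^\perp$ of the reduced equation with flux $\varphi' = (\varphi_1,\ldots,\varphi_d)$ and diffusion $a' = (a_{ij})_{i,j=1}^d$, characterized by $a'(u)\xi\cdot\xi = a(u)\xi\cdot\xi$ for $\xi \in H^\perp$. The structural data restrict cleanly: the first $d$ columns $b' = (b_{ij})_{i\le r,\,j\le d}$ of $b$ satisfy $b'^\top b' = a'$, and with $B_i' = (B_{i1},\ldots,B_{id})$ the $x''$-independence of $u$ yields $\div_x B_i(u) = \div_{x'} B_i'(\tilde u)$, which transfers (\ref{pr}) and (\ref{cr}) verbatim. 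For (\ref{entr}) I would test the inequality (\ref{entrI}) in $\R^n$ against $F(t,x) = f(t,x')\rho(x'')$ with $0 \le f \in C_0^\infty(\R_+\times\R^d)$ and $0 \le \rho \in C_0^\infty(H)$; integrating in $x''$, every term carrying an $x''$-derivative of $\rho$ integrates to zero (integral of a derivative of a compactly supported function), so only the pure $x'$-contributions survive and one recovers the reduced entropy inequality with $\varphi'$, $A' = (A_{ij})_{i,j=1}^d$ (a primitive of $a'$), and $B_i'$. The initial condition (iv) of Definition~\ref{def1} passes by $x''$-independence.

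The reduced problem is periodic with the lattice $L_0 \subset H^\perp$, so its maximal periodic subspace is trivial, i.e.\ the reduced $H$ has dimension $0$. Since the dual lattice $G' = L_0'$ lies in $H^\perp$ and $\varphi'(u)\cdot\xi = \varphi(u)\cdot\xi$, $a'(u)\xi\cdot\xi = a(u)\xi\cdot\xi$ for every $\xi \in G'$, the nonlinearity-diffusivity condition (\ref{gn}) for the reduced problem coincides with the one assumed in the hypothesis. Applying \cite[Theorem~1.1]{PaJDE1} to $\tilde u$ gives $\esslim_{t\to+\infty}\tilde u(t,\cdot) = m$ in $L^1(\T^d)$, which is exactly (\ref{decp}) because $u(t,x) = \tilde u(t,x')$ descends to $\T^d = H^\perp/L_0$.

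I expect the only points needing care to be the rigorous $H$-invariance of $u$ (hinging entirely on the cited uniqueness of the periodic e.s.) and the clean restriction of the square-root and primitive data to $H^\perp$; the vanishing of the $x''$-derivative terms in the entropy inequality is the computational core of the reduction, but it is routine once the splitting $x = (x',x'')$ is fixed.
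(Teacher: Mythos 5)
Your proposal is correct and follows essentially the same route as the paper: establish $H$-invariance of the e.s.\ via uniqueness in the periodic case, reduce to a $d$-dimensional problem on $H^\perp$ with lattice of periods $L_0$, verify that condition (\ref{gn}) transfers to the reduced equation, and apply \cite[Theorem~1.1]{PaJDE1}. The only (inessential) difference is that the paper carries out the reduction through a general non-degenerate linear change of variables $Q$ mapping $H$ onto a coordinate subspace, whereas you work directly in orthonormal coordinates adapted to the splitting $\R^n=H^\perp\oplus H$ and verify the conditions of Definition~\ref{def1} for the restricted data by hand.
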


\begin{proof}
Observe that for all $e\in G$ $u_0(x+e)=u_0(x)$ a.e. in $\R^n$. Obviously, $u(t,x+e)$ is an e.s. of (\ref{1}), (\ref{2})   with the same initial data $u_0(x)$. By the uniqueness of an e.s., known in the case of periodic initial function, we claim that $u(t,x+e)=u(t,x)$ a.e. in $\Pi$, that is, $u(t,x)$ is $G$-periodic in the space variables.
In particular, $u(t,\cdot)\in L^1(\T^d)$ for a.e. $t>0$ and relation (\ref{decp}) is well-defined.
We choose a non-degenerate linear operator $Q$ in $\R^n$, which transfers the space $H$ into the standard subspace
$$
\R^{n-d}=\{ \ x=(x_1,\ldots,x_n) \ | \ x_i=0 \ \forall i=1,\ldots,d \ \}.
$$
After the change $y=Qx$ our problem reduces to the problem
\begin{equation}\label{1r}
v_t+\div_y(\tilde\varphi(v)-\tilde a(v)\nabla_y v)=0, \quad v(0,y)=v_0(y)\doteq u_0(Q^{-1}(y)),
\end{equation}
where $\tilde\varphi(v)=Q\varphi(v)$, $\tilde a(v)=Qa(v)Q^*$, where $Q^*=Q^\top$ is a conjugate operator. As is easy to verify, $u(t,x)=v(t,Qx)$, where $v(t,y)$ is an e.s. of (\ref{1r}). Observe that $v_0(y)$ is periodic with the group of periods
$\tilde G=Q(G)$. Therefore, the e.s. $v(t,y)$  is space periodic with the group of periods containing $\tilde G$. In particular, the functions $v_0(y)$, $v(t,y)$ are constant in directions $\R^{n-d}=Q(H)$: $v_0(y)=v_0(y_1,\ldots,y_d)$, $v(t,y)=v(t,y_1,\ldots,y_d)$ with $v_0(y')\in L^\infty(\R^d)$, $v(t,y')\in L^\infty(\R_+\times\R^d)$.  This readily implies that $v(t,y')$ is an e.s. of the low-dimensional problem
\begin{align}\label{1l}
v_t+\div_{y'}(\tilde\varphi(v)-\tilde a(v)\nabla_{y'} v)=\nonumber\\ v_t+\sum_{i=1}^d \left(\frac{\partial}{\partial y_i}\tilde\varphi_i(u)-\frac{\partial}{\partial y_i}\sum_{j=1}^d (\tilde a)_{ij}(v)v_{y_j}\right)=0, \quad v(0,y')=v_0(y').
\end{align}
Observe that $v_0(y')$ is periodic with the lattice of periods
$$
\tilde L=\tilde G\cap\R^d=\{ \ y=(y_1,\ldots,y_d)\in\R^d \ | \ (y_1,\dots,y_d,0,\ldots,0)\in \tilde G \ \}
$$
(by Lemma~\ref{lem2} it is indeed a lattice). Using again Lemma~\ref{lem2}, we find that the dual lattice
$\tilde L'=\tilde G'=(Q^*)^{-1} G'\subset\R^d$. Observe that for each nonzero
$\zeta\in\tilde L'$ the vector $\xi=Q^*\zeta\in G'$, and
\begin{align*}
\zeta\cdot\tilde\varphi(v)=\zeta\cdot Q\varphi(v)=Q^*\zeta\cdot\varphi(v)=\xi\cdot\varphi(v), \\
\tilde a(v)\zeta\cdot\zeta= Qa(v)Q^*\zeta\cdot\zeta=a(v)Q^*\zeta\cdot Q^*\zeta=a(v)\xi\cdot\xi
\end{align*}
By condition (\ref{gn}) we claim that in any vicinity of $m$ for every $\zeta\in\tilde L'$ either the function $v\to\zeta\cdot\tilde\varphi(v)$ is not affine or the function $v\to\tilde a(v)\zeta\cdot\zeta$ is not zero on a set of positive measure. Observe that $\zeta\in\tilde L'\subset\R^d$, and we may replace $\tilde\varphi(v)$, $\tilde a(v)$ by $P\tilde\varphi(v)$, $P\tilde a(v)P^*$, respectively, where $P:\R^n\to\R^d$ is the orthogonal projection. This means that  equation (\ref{1l}) satisfies the nonlinearity-diffusivity condition subject to the lattice $\tilde L$ in $\R^d$. By the decay property \cite[Theorem~1.1]{PaJDE1} applied to the e.s. $v(t,y')$ of (\ref{1l}) we claim that
\begin{equation}\label{decp1}
\esslim_{t\to+\infty} v(t,\cdot)=\tilde m=\int_{\tilde\T^d} v_0(y')dy' \ \mbox{ in } L^1(\tilde\T^d),
\end{equation}
where $\tilde\T^d=\R^d/\tilde L=\R^n/\tilde G$ is a torus corresponding to the lattice $\tilde L$, and $dy'$ denotes the normalized Lebesgue measure on this torus. Making the change of variables $y=Qx$, which induces an isomorphism $Q:\T^d\to\tilde\T^d$, we find that
$$
\tilde m=\int_{\T^d} u_0(x)dx=m,
$$
and relation (\ref{decp}) follows from (\ref{decp1}). The proof is complete.
\end{proof}
Remark also that Theorem~\ref{thPD} follows from result \cite{PaLJM} on decay of almost periodic (in Besicovitch sense) e.s. In \cite{PaLJM} the decay property (in the Besicovitch space) was established under the same assumption as in Theorem~\ref{thPD} but with the set $G'$ replaced by the additive group $M(u_0)$ generated by the spectrum of initial function. In the case when $u_0$ is periodic, its Fourier expansion have the form $u_0(x)=\sum\limits_{\lambda\in G'}a_\lambda e^{2\pi\lambda\cdot x}$ and, in particular, the spectrum $Sp(u_0)=\{\lambda\in\R^n | a_\lambda\not=0\}$ as well as the group $M(u_0)$ are contained in the lattice $G'$. This implies that $M(u_0)$ is a lattice too. By the duality, in order to prove the equality $M(u_0)=G'$ it is sufficient to verify the inclusion $M(u_0)'\subset G$. If $e\in M(u_0)'$ then
$\lambda\cdot e\in\Z$ for each $\lambda\in Sp(u_0)$. This implies that the Fourier series of $u_0(x+e)$ is the same as of $u_0(x)$. Hence, $u_0(x+e)=u_0(x)$ a.e. in $\R^n$ and $e\in G$. We see that $M(u_0)'\subset G$ and, therefore, $M(u_0)=G'$. Thus, the assumption of \cite{PaLJM} reduces to our assumption. Taking also into account that in periodic case the Besicivitch norm is equivalent to the norm of $L^1(\T^d)$, we conclude that the decay property stated in Theorem~\ref{thPD} holds.

Let us show that condition (\ref{gn}) is precise, that is, if this condition fails, then there exists a periodic function exactly with the group of periods $G$ such that the corresponding e.s. does not satisfy the decay property. In fact, if (\ref{gn}) fails, we can find a nonzero vector $\xi\in G'$ and constants $\delta>0$, $k\in\R$ such that $\xi\cdot\varphi(u)-ku=\const$ on the segment $|u-m|\le\delta$ while $a(u)\xi\cdot\xi=0$ a.e. on this segment. Since the matrix $a(u)\ge 0$, the equality $a(u)\xi\cdot\xi=0$ holds if and only if $a(u)\xi=0$. We define the hyperspace
$E=\{x\in\R^n \ | \ \xi\cdot x=0\}$. The linear functional $\xi$ is a homomorphism of the group $G$ into $\Z$. The range
of this homomorphism is a subgroup $r\Z\subset\Z$ for some $r\in\N$. Denote by $G_1=E\cap G$ the kernel of $\xi$.
There exists an element $e_0\in G$ such that $\xi\cdot e_0=r$. Then, as is easy to verify, the map $(e,m)\to e+me_0$
forms a group isomorphism of $G_1\oplus\Z$ onto $G$. We choose $n-1$ independent vectors $\zeta_i$ such that $\zeta_i\cdot e_0=0$, $i=1,\ldots,n-1$, and complement them to a basis attaching the vector $\zeta_n=\xi$. We make the linear change $y=y(t,x)$
\begin{equation}\label{ch}
y_i=\zeta_i\cdot x, \ i=1,\ldots,n-1, \quad y_n=\zeta_n\cdot x-kt,
\end{equation}
which reduces (\ref{1}) to the equation
\begin{equation}\label{ch1}
u_t+\div_y(\tilde\varphi(u)-\tilde a(u)\nabla_y u)=0
\end{equation}
such that $\tilde\varphi_n(u)=\xi\varphi(u)-ku=\const$, $\tilde a_{in}=\tilde a_{ni}=a(u)\xi\cdot\zeta_i=0$, $i=1,\ldots,n$, a.e. on the segment $|u-m|\le\delta$.
If an initial data $\tilde u_0(y)$ satisfies the condition $|\tilde u_0-m|\le\delta$ then a corresponding e.s. $u=\tilde u(t,x)$ of (\ref{ch1}) also satisfies the condition $|\tilde u(t,x)-m|\le\delta$ a.e. on $\Pi$, by the maximum-minimum principle \cite[Corollary~2.2]{PaJDE2}. Since $\tilde\varphi_n(u)$ is constant on the segment $|u-m|\le\delta$, while the diffusion coefficients $\tilde a_{ij}(u)$ with $\max(i,j)=n$ vanish a.e. on this segment, then $\tilde u(t,y)$ is an e.s. of the equation
\begin{equation}\label{ch2}
u_t+\div_{y'}(\tilde\varphi(u)-\tilde a(u)\nabla_{y'}u)=u_t+\sum_{i=1}^{n-1}(\tilde\varphi_i(u))_{y_i}-\bigl(\sum_{i,j=1}^{n-1}\tilde a_{ij}(u)u_{y_j}\bigr)_{y_i}=0,
\end{equation}
where $y'=(y_1,\ldots,y_{n-1})\in\R^{n-1}$. This readily implies that for a.e. fixed $y_n\in\R$ the function
$\tilde u(t,y',y_n)$ is an e.s. of the Cauchy problem for the low-dimensional equation (\ref{ch2}), considered in
the domain $\R_+\times\R^{n-1}$, with the corresponding initial function $\tilde u_0(y',y_n)$. Assume that the function
$\tilde u_0(y',y_n)$ is $y'$-periodic (with some group of periods) with the mean value $m(y_n)=\frac{1}{|P|}\int_{P} u_0(y',y_n)dy'$, $P\subset\R^{n-1}$ being the periodicity cell (or, the same, the corresponding torus). Then for a.e. $y_n\in\R$ the mean value of $\tilde u(t,\cdot,y_n)$ does not depend on $t$ and equals $m(y_n)$ (see, for instance, \cite[Corollary~2.1]{PaJDE1}). If this function $m(y_n)$ is not constant (a.e. in $\R$) then the e.s. $\tilde u(t,y)$ cannot satisfy the decay property. In fact, if $\tilde u(t,\cdot)-m\to 0$ as $t\to+\infty$ in $L^1_{loc}(\R^n)$, then for each interval $I\subset\R$
$$
\left|\int_I (m(y_n)-m)dy_n\right|=\frac{1}{|P|}\left|\int_{P\times I}(\tilde u(t,y)-m)dy\right|\le\frac{1}{|P|}\int_{P\times I}|\tilde u(t,y)-m|dy,
$$
which implies, in the limit as $t\to+\infty$, that $\int_I (m(y_n)-m)dy_n=0$. Since $I$ is an arbitrary interval, we find that $m(y_n)=m$ a.e. in $\R$, which contradicts to our assumption.

Now we choose a function $v(x)\in C(E)$ such that $\|v\|_\infty\le\delta/2$ and that $v(x)$ is periodic with the group of periods $G_1$ and with zero mean value. Since $G_1=H\oplus (E\cap L_0)$, such $v(x)$ actually exists, this function is constant in the direction $H$ and is periodic in $E\cap H^\perp$ with exactly the lattice of periods $E\cap L_0$. We set
$$
u_0(x)=m+v(\pr(x))+\frac{\delta}{2}\sin(2\pi\xi\cdot x/r),
$$
where $\pr(x)\in E$ is the projection of $x$ on $E$ along the vector $e_0$ (so that $x-\pr(x)\parallel e_0$).
Then $\|u_0-m\|_\infty\le\delta$. Let us show that $u_0(x)$ is periodic with the group of periods $G$. Since vectors
$e\in G_1$ and $e_0$ are periods of $u_0$, then the group $G=G_1+\Z e_0$ consists of period of $u_0$. On the other hand, if
$e\in\R^n$ is a period of $u_0$ then it can be decomposed into a sum $e=e_1+\lambda e_0$, where $e_1\in E$, $\lambda\in\R$.
For $x=x'+se_0$, $x'\in E$, we have
$$
u_0(x+e)=m+v(x'+e_1)+\frac{\delta}{2}\sin(2\pi(s+\lambda))=u_0(x)=m+v(x')+\frac{\delta}{2}\sin(2\pi s).
$$
Averaging this equality over $x'$, we obtain that $\sin(2\pi(s+\lambda))=\sin(2\pi s)$ for all $s\in\R$, which implies that $\lambda\in\Z$ and that $v(x'+e_1)=v(x')$ for all $x'\in E$. Therefore, $e_1\in G_1$
(remind that $G_1$ is the group of periods of $v$). Hence $e=e_1+\lambda e_0\in G_1+\Z e_0=G$. We proved that the group
of periods of $u_0$ is exactly $G$. It is clear that $m$ is the mean value of $u_0$.

Now, we are going to show that an e.s. $u(t,x)$ of (\ref{1}), (\ref{2}) with the chosen initial data does not satisfy decay property (\ref{decp}). After the change (\ref{ch}) the initial function $u_0(x)$ transforms into $\displaystyle \tilde u_0(y)=m+\tilde v(y')+\frac{\delta}{2}\sin(2\pi y_n/r)$,
the function $\tilde v(y')\in C(\R^{n-1})$ is determined by the identity $v(x)=\tilde v(y(x))$, where $y(x)=y(0,x)$, $x\in E$, is a linear isomorphism $E\to\R^{n-1}$. Obviously, the function $\tilde v(y')$ is periodic with the group of periods $y(G_1)$ and zero mean value. Therefore, the mean value of initial data over the variables $y'$ equals $\displaystyle m(y_n)=m+\frac{\delta}{2}\sin(2\pi y_n/r)$ and it is not constant. In this case it has been already demonstrated that an e.s. $\tilde u(t,y)$ of the Cauchy problem
for equation (\ref{ch1}) does not satisfy the decay property. Due to the identity $u(t,x)=\tilde u(t,y(t,x))$, we see that an e.s. of original problem does not satisfy (\ref{decp}) either.

\section{Proof of the main results}
\subsection{Auxiliary lemmas}

\begin{lemma}\label{lem2}
Let $G$ be the group of periods of a periodic function $p(x)\in L^\infty(\R^n)$, and let, as in Introduction, $H$ be a maximal linear subspace of $G$, $L_0=G\cap H^\perp$, and let $G'$ be a dual group to $G$. Then

(i) $L_0$ is a lattice of dimension $d=\dim H^\perp$;

(ii) $G'$ is a lattice in $H^\perp$ of dimension $d$, and $G'=L_0'$ in $H^\perp$.
\end{lemma}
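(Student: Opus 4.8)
The plan is to establish the structure theorem by first understanding the group $G$ itself, then peeling off the linear part $H$ to reduce to a genuine lattice. The key topological fact I would establish first is that $G$ is a \emph{closed} subgroup of $\R^n$. Indeed, if $e_k\in G$ and $e_k\to e$ in $\R^n$, then for any test function $\psi\in C_0^\infty(\R^n)$ the translates $p(\cdot+e_k)$ converge to $p(\cdot+e)$ in $L^1_{loc}$ (using $p\in L^\infty$ and dominated convergence on the support of $\psi$), and since $\int p(x+e_k)\psi(x)\,dx=\int p(x)\psi(x)\,dx$ for all $k$, the same holds in the limit; as $\psi$ is arbitrary this forces $p(x+e)=p(x)$ a.e., so $e\in G$. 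Once $G$ is a closed subgroup, I would invoke the classical structure theorem for closed subgroups of $\R^n$: every such subgroup is isomorphic to $H\oplus L_0$ where $H$ is the maximal linear subspace it contains and $L_0$ is a discrete (hence free abelian) subgroup spanning a complement. This already gives $G=H\oplus L_0$ with $L_0$ discrete.

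Next I would pin down the dimension of $L_0$. Since $H$ is the maximal linear subspace inside $G$ and $L_0=G\cap H^\perp$ is discrete, the linear hull of $G$ is $H\oplus\mathrm{span}(L_0)$. The periodicity hypothesis states precisely that this linear hull is all of $\R^n$, so $\mathrm{span}(L_0)=H^\perp$ and therefore $L_0$ is a discrete subgroup spanning the $d$-dimensional space $H^\perp$; a discrete subgroup spanning a $d$-dimensional space is a lattice of rank exactly $d$. This proves part (i).

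For part (ii) the main work is the duality computation. First, since any $\xi\in G'$ satisfies $\xi\cdot e\in\Z$ for all $e\in G$, and in particular $\xi\cdot h\in\Z$ for all $h\in H$, and $H$ is a linear subspace (so contains $th$ for all real $t$), the only way $t(\xi\cdot h)\in\Z$ for all $t\in\R$ is $\xi\cdot h=0$; hence $\xi\in H^\perp$, giving $G'\subset H^\perp$. Then for $\xi\in H^\perp$ the pairing condition $\xi\cdot e\in\Z$ for all $e=h+\ell\in G$ (with $h\in H$, $\ell\in L_0$) reduces, since $\xi\cdot h=0$, to $\xi\cdot\ell\in\Z$ for all $\ell\in L_0$; this is exactly the condition $\xi\in L_0'$ computed inside $H^\perp$. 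Thus $G'=L_0'$ in $H^\perp$. Finally, the dual of a rank-$d$ lattice in the $d$-dimensional space $H^\perp$ is again a rank-$d$ lattice in $H^\perp$ (the dual lattice is spanned by the dual basis of any basis of $L_0$), which completes part (ii).

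The step I expect to be the main obstacle is the very first one, proving that $G$ is closed, because the definition of period here is the measure-theoretic one ($p(x+e)=p(x)$ almost everywhere) rather than the pointwise one. The passage to the limit must be justified weakly, against test functions, precisely to avoid the pathologies the paper warns about (e.g. the Dirichlet function, whose pointwise period group $\Q$ is not closed). Once closedness is secured, the remaining arguments are the standard structure theory of closed subgroups of $\R^n$ together with elementary lattice-duality bookkeeping, and I would not expect serious difficulty there.
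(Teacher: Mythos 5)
Your proof is correct, but it takes a genuinely different route from the paper's. You first establish that $G$ is a \emph{closed} subgroup of $\R^n$ (via continuity of translation in $L^1_{loc}$ and passage to the limit against test functions) and then invoke the classical structure theorem for closed subgroups of $\R^n$, which hands you the decomposition $G=H\oplus L_0$ with $L_0$ discrete; the dimension count and the duality computation $G'=L_0'$ then go as in the paper. The paper never uses closedness or the structure theorem: to prove that $L_0$ is discrete it argues by contradiction, taking $h_k\in L_0\setminus\{0\}$ with $h_k\to 0$, extracting a limiting direction $\xi=\lim |h_k|^{-1}h_k$, and showing by mollification (the convolution $v=p\ast w\in C^1(\R^n)$ inherits every period, hence $\nabla v\cdot\xi\equiv 0$; letting the mollifier tend to the $\delta$-function gives $p(x+\alpha\xi)=p(x)$ a.e. for all $\alpha$) that $H+\R\xi$ would be a linear subspace of $G$ strictly larger than $H$, contradicting maximality. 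For part (ii) the paper also proves discreteness of $G'$ directly, from a basis of $\R^n$ lying in $G$, rather than deducing it from $G'=L_0'$ as you do. Both arguments must confront the same measure-theoretic subtlety (periods defined only almost everywhere, so that pointwise period groups like $\Q$ for the Dirichlet function are excluded): you handle it by testing against $C_0^\infty$ functions, the paper by convolution with such functions --- essentially the same device. Your route is shorter and more modular if the structure theorem may be cited as known; the paper's is self-contained, which is presumably why the author includes the proof ``for the sake of completeness.''
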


\begin{proof}
(i) We have to prove that the group $L_0$ is discrete, that is, all its points are isolated. Since $L_0$ is a group it is sufficient to show that $0$ is an isolated point of $L_0$. Assuming the contrary, we find a sequence $h_k\in L_0$, such that $h_k\not=0$, $h_k\to 0$ as $k\to\infty$. By compactness of the unit sphere $|x|=1$, we may suppose that the sequence
$|h_k|^{-1}h_k\to\xi$ as $k\to\infty$, where $\xi\in H^\perp$, $|\xi|=1$. Let $w(x)\in C_0^1(\R^n)$ and
$v(x)$ be the convolution $v=p\ast w(x)=\int_{\R^n}p(x-y)w(y)dy$. By known property of convolution $v(x)\in C^1(\R^n)$. Further, for each $e\in G$ and $x\in\R^n$
$$
v(x+e)=\int_{\R^n}p(x-y+e)w(y)dy=\int_{\R^n}p(x-y)w(y)dy=v(x)
$$
and in particular $v(x+h_k)=v(x)$ $\forall k\in\N$. Since $v$ is differentiable,
\begin{equation}\label{lm1}
0=|h_k|^{-1}(v(x+h_k)-v(x))=\nabla v(x)\cdot |h_k|^{-1}h_k+\varepsilon_k,
\end{equation}
where $\varepsilon_k\to 0$ as $k\to\infty$. Passing in (\ref{lm1}) to the limit as $k\to\infty$, we obtain that
$\frac{\partial v(x)}{\partial\xi}=\nabla v(x)\cdot\xi=0$ for all $x\in\R^n$. Therefore, $v$ is constant in the direction $\xi$: $v(x+s\xi)=v(x)$ for all $s\in\R$, $x\in\R^n$. We choose a nonnegative function $w(x)\in C_0^1(\R^n)$ such that $\int_{\R^n} w(x)dx=1$ and set $\omega_r(x)=r^n\omega(rx)$, $r\in\N$. This sequence (an approximate unity) converges to Dirac $\delta$-function as $r\to\infty$ weakly in the space of distributions $\D'(\R^n)$. The corresponding sequence of averaged functions $v_r=p\ast w_r(x)$ converges to $p$ as $r\to\infty$ in $L^1_{loc}(\R^n)$. As was already established, the functions $v_r$ are constant in the direction $\xi$. Therefore, for each $\alpha\in\R$, $R>0$
$$
\int_{|x|<R} |v_r(x+\alpha\xi)-v_r(x)|dx=0.
$$
Passing in this relation to the limit as $r\to\infty$, we obtain that
$$
\int_{|x|<R} |p(x+\alpha\xi)-p(x)|dx=0 \quad \forall R>0,
$$
which implies that $p(x+\alpha\xi)=p(x)$ a.e. in $\R^n$, that is, $\alpha\xi\in G$. Thus,
the linear subspace $H_1=\{ \ x+\alpha\xi \ | \ x\in H, \alpha\in\R \ \}\subset G$. Since $\xi\in H^\perp$, $\xi\not=0$,
then $H\subsetneq H_1$. But this contradicts to the maximality of $H$. This contradiction proves that $L_0$ is a discrete additive subgroup of $\R^n$, i.e., a lattice. By the construction, $G=H\oplus L_0$. Since $G$ generates the entire space $\R^n$, then $L_0$ must generate $H^\perp$, that is, $\dim L_0=d$.

(ii) Since $\dim G=n$, we can choose a basis $e_k$, $k=1,\ldots,n$, of the linear space $\R^n$ lying in $G$. We define
$R=\max\limits_{k=1,\ldots,n}|e_k|$, $\delta=1/R$. Let $\xi\in G'$, $|\xi|<\delta$. Then
$|\xi\cdot e_k|\le|\xi||e_k|\le |\xi|R<1$. Since $\xi\cdot e_k\in\Z$ we claim that $\xi\cdot e_k=0$ for all $k=1,\ldots,n$.
Since $e_k$, $k=1,\ldots,n$, is a basis, this implies that $\xi=0$. We obtain that the ball $|\xi|<\delta$ contains only zero element of the group $G'$. This means that this group is discrete and therefore it is a lattice. If $\xi\in G'$, $e\in H$ then $\alpha\xi\cdot e=\xi\cdot\alpha e\in\Z$ for all $\alpha\in\R$. This is possible only if $\xi\cdot e=0$. This holds for every $e\in H$, that is, $\xi\in H^\perp$. Obviously, for such $\xi$, the requirement $\xi\in G'$ reduces to the condition $\xi\cdot e\in\Z$ for all $e\in L_0$. We conclude that $G'=L_0'$. Since $L_0$ is a lattice, this in particular implies that $\dim G'=\dim L_0=d$.
\end{proof}

\begin{lemma}\label{equ}
The norms $\|\cdot\|_V$ defined in (\ref{normV}) are mutually equivalent.
\end{lemma}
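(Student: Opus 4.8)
The plan is to show that each norm $\|\cdot\|_V$ is equivalent to the distinguished norm $\|\cdot\|_X$ attached to the unit ball $B_1=\{|x|<1\}$; mutual equivalence of all the $\|\cdot\|_V$ then follows at once from the transitivity of the equivalence relation on norms. So I would fix an arbitrary bounded open set $V\subset\R^n$ and an arbitrary $u\in L^\infty(\R^n)$ (which guarantees that every integral below is finite), and produce constants $0<c_V\le C_V$ with $c_V\|u\|_X\le\|u\|_V\le C_V\|u\|_X$. The two defining features of $V$ that I will exploit are its boundedness (for the upper bound) and its openness together with nonemptiness (for the lower bound).

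For the upper bound $\|u\|_V\le C_V\|u\|_X$ I would use that $V\subset B_R$ for some $R>0$. The compact set $\overline{B_R}$ can be covered by finitely many unit balls $z_i+B_1$, $i=1,\ldots,N$. Then for every $y\in\R^n$ the translate $y+V$ is contained in $\bigcup_{i=1}^N(y+z_i+B_1)$, whence
\begin{equation*}
\int_{y+V}|u(x)|dx\le\sum_{i=1}^N\int_{y+z_i+B_1}|u(x)|dx\le N\|u\|_X .
\end{equation*}
Taking the supremum over $y\in\R^n$ gives $\|u\|_V\le N\|u\|_X$.

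For the lower bound I would use openness: since $V\neq\emptyset$ is open it contains a ball $B(x_0,\rho)$ with $\rho>0$. Cover the compact set $\overline{B_1}$ by finitely many balls $w_j+B(0,\rho)$, $j=1,\ldots,M$, of radius $\rho$, and observe that $w_j+B(0,\rho)=(w_j-x_0)+B(x_0,\rho)\subset(w_j-x_0)+V$. Consequently, for every $y\in\R^n$,
\begin{equation*}
\int_{y+B_1}|u(x)|dx\le\sum_{j=1}^M\int_{y+w_j-x_0+V}|u(x)|dx\le M\|u\|_V ,
\end{equation*}
and passing to the supremum over $y$ yields $\|u\|_X\le M\|u\|_V$. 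Combining the two estimates gives $M^{-1}\|u\|_X\le\|u\|_V\le N\|u\|_X$, so $\|\cdot\|_V\sim\|\cdot\|_X$, and hence $\|\cdot\|_{V_1}\sim\|\cdot\|_X\sim\|\cdot\|_{V_2}$ for any two bounded open sets.

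I expect no genuine obstacle here: the only ingredients are the translation invariance of Lebesgue measure, the compactness of the closure of a bounded set (making the coverings finite), and the openness of $V$ (so that $V$ contains an honest ball). The single point deserving a line of care is that the covering inclusions must hold \emph{uniformly in the translation parameter} $y$, which is immediate because translating a fixed finite covering of $\overline{B_R}$ (respectively $\overline{B_1}$) by $y$ produces a covering of the translated set with the same number of pieces; this is precisely what makes the constants $N$ and $M$ independent of both $y$ and $u$.
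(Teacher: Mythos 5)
Your proof is correct and rests on exactly the same mechanism as the paper's: cover the compact closure of one set by finitely many translates of the other and use translation invariance of Lebesgue measure to bound one norm by a constant multiple of the other. The only cosmetic difference is that you route everything through the unit-ball norm $\|\cdot\|_X$ and invoke transitivity, whereas the paper compares two arbitrary bounded open sets $V_1,V_2$ directly by the same symmetric covering argument.
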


\begin{proof}
Let $V_1,V_2$ be open bounded sets in $\R^n$, and $K_1=\Cl V_1$ be the closure of $V_1$. Then $K_1$ is a compact set while $y+V_2$, $y\in K_1$, is its open covering.
By the compactness there is a finite set $y_i$, $i=1,\ldots,m$, such that
$\displaystyle K_1\subset \bigcup\limits_{i=1}^m (y_i+V_2)$. This implies that for every $y\in\R^n$ and $u=u(x)\in L^\infty(\R^n)$
$$
\int_{y+V_1}|u(x)|dx\le\sum_{i=1}^m \int_{y+y_i+V_2}|u(x)|dx\le m\|u\|_{V_2}.
$$
Hence, $\forall u=u(x)\in L^\infty(\R^n)$
$$
\|u\|_{V_1}=\sup_{y\in\R^n}\int_{y+V_1}|u(x)|dx\le m\|u\|_{V_2}.
$$
Changing the places of $V_1$, $V_2$, we obtain the inverse inequality
$\|u\|_{V_2}\le l\|u\|_{V_1}$ for all $u\in L^\infty(\R^n)$, where $l$ is some positive constant. This completes the proof.
\end{proof}

\begin{proposition}\label{pro1}
Let $G$ be a lattice and values $\alpha^+,\alpha^-\in F$ be such that $\alpha^-<m<\alpha^+$. Then an e.s. $u(t,x)$ of (\ref{1}), (\ref{2})
satisfies the property
$$
\esslimsup_{t\to +\infty}\|u(t,\cdot)-m\|_X\le 2^n(\alpha^+-\alpha^-).
$$
\end{proposition}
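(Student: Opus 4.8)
The plan is to describe the long‑time behaviour of $u$ through its $\omega$‑limit set under space shifts and to show that every limit profile has essential range confined to $[\alpha^-,\alpha^+]$. Concretely, I would consider sequences $u(t_k,\cdot+y_k)$ with $t_k\to+\infty$ and $y_k$ chosen (almost) to realize the supremum in the Stepanov norm $\|u(t_k,\cdot)-m\|_X=\sup_y\int_{|x-y|<1}|u(t_k,x)-m|\,dx$. Using the uniform bound $\|u(t,\cdot)\|_\infty\le\|u_0\|_\infty$ and the standard translation/compactness estimates for entropy solutions, I would extract a limit trajectory $w_\infty$ which is an e.s. of the same equation. Because $v\in L_0^\infty(\R^n)$ vanishes at infinity in measure, the shifted initial data $u_0(\cdot+y_k)=p(\cdot+y_k)+v(\cdot+y_k)$ lose the perturbation in the limit, so every such $w_\infty$ is a $G$‑periodic e.s. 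Granting that each $w_\infty$ satisfies $\alpha^-\le w_\infty\le\alpha^+$ a.e., one gets $|w_\infty-m|\le\alpha^+-\alpha^-$ pointwise (since $m\in(\alpha^-,\alpha^+)$), whence $\int_{|x-y|<1}|w_\infty-m|\,dx\le(\alpha^+-\alpha^-)|B_1|\le2^n(\alpha^+-\alpha^-)$; the dimensional factor enters simply through $|B_1|\le2^n$ (the unit ball sits inside the cube of side $2$). Passing back to $u$ gives the asserted $\esslimsup$ bound.

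The trapping $\alpha^-\le w_\infty\le\alpha^+$ is where $\alpha^\pm\in F$ must be used. I would introduce the monotone truncation functionals on the torus, $t\mapsto\int_{\T^d}(w(t,y)-\alpha^+)^+\,dy$ and $t\mapsto\int_{\T^d}(\alpha^--w(t,y))^+\,dy$, which are non‑increasing because every constant is a stationary e.s. and, on the torus, comparison with the constants $\alpha^\pm$ applies (the periodic flux and diffusion boundary terms integrate to zero). The mean $\int_{\T^d}w(t,y)\,dy$ is conserved and equals $m$ (take $\eta(u)=\pm u$, cf. \cite[Corollary~2.1]{PaJDE1}). Consequently both functionals have finite limits, and along any $\omega$‑limit trajectory $w_\infty$ they are constant in $t$.

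To conclude that these limits vanish, i.e. that $w_\infty$ cannot keep positive mass above $\alpha^+$ or below $\alpha^-$, I would invoke the reduction/compactness mechanism behind the periodic decay theorem \cite[Theorem~1.1]{PaJDE1} (see also \cite{ChPer2}), localized at the thresholds $\alpha^\pm$ rather than at the mean. Since $m<\alpha^+$, the conserved mean forces mass of $w_\infty$ strictly below $\alpha^+$; if simultaneously some mass sat above $\alpha^+$, the profile would have to cross the value $\alpha^+\in F$, where for some $\xi\in G'$ the flux $\varphi\cdot\xi$ is non‑affine or $a\xi\cdot\xi\not\equiv0$ on every neighbourhood. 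The nonlinearity–diffusivity at $\alpha^+$ rules out a steady periodic structure crossing this value, so the constant value of $\int_{\T^d}(w_\infty-\alpha^+)^+$ must be $0$; symmetrically for $\alpha^-$. This yields $\alpha^-\le w_\infty\le\alpha^+$. The disappearance of the perturbation is handled separately by the $L_0^\infty$ mechanism of \cite{PaJDE2,PaSIMA2}: splitting $v=v\chi_{\{|v|\le\varepsilon\}}+v\chi_{\{|v|>\varepsilon\}}$, the first part is $\le\varepsilon$ in $L^\infty$ and, by the maximum–minimum principle \cite[Corollary~2.2]{PaJDE2} and $L^1$‑contraction, perturbs the solution by $O(\varepsilon)$ uniformly in $t$, while the second part carries finite mass on $\{|v|>\varepsilon\}$ of finite measure and spreads out, so its contribution to $\|\cdot\|_X$ tends to $0$; letting $\varepsilon\to0$ shows the limit profiles are unaffected by $v$.

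The hard part will be the no‑crossing step: deducing, from membership $\alpha^\pm\in F$ alone and \emph{without} assuming $m\in F$, that the constant values of the truncation functionals along the $\omega$‑limit are zero. This is a value‑localized version of the periodic decay theorem, and I expect to realize it through the $H$‑measure / kinetic reduction used for \cite[Theorem~1.1]{PaJDE1}, applied to the $\omega$‑limit set in $L^1(\T^d)$ and to the flow‑invariant limiting functionals. The most delicate technical point is that, for merely continuous flux and degenerate diffusion, entropy solutions may be non‑unique; one must therefore run the comparison and compactness arguments for the largest and smallest e.s. (whose existence is recorded in \cite[Theorem~1.1]{PaJDE2}) and check that the monotone functionals and translation‑compactness estimates remain valid in this generality. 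The remaining ingredients — conservation of the mean, monotonicity of the truncations, and the conversion from the torus to the shift‑invariant norm $\|\cdot\|_X$ via Lemma~\ref{equ} and $|B_1|\le2^n$ — are routine.
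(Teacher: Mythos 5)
Your overall strategy---trap $u$ asymptotically between the levels $\alpha^-$ and $\alpha^+$ and then integrate $|u-m|\le\alpha^+-\alpha^-$ over unit balls to get the factor $|B_1|\le 2^n$---is the right target, but the mechanism you propose for the trapping does not go through, and the step you yourself flag as ``the hard part'' is precisely the missing content. First, the $\omega$-limit construction is unavailable in this setting: the flux is merely continuous, the diffusion is degenerate, and the nonlinearity--diffusivity condition is assumed only at the two points $\alpha^\pm$, not on the range of $u$, so there is no averaging-lemma or kinetic compactness giving strong $L^1_{loc}$-precompactness of the translates $u(t_k,\cdot+y_k)$; moreover e.s.\ are non-unique here, so even if a limit existed one could not identify it as ``the'' periodic e.s.\ with the limiting data. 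Second, the ``no-crossing'' claim---that a $G$-periodic e.s.\ cannot sustain a stationary profile whose essential range straddles a point of $F$---is not a known result and is not what the periodic decay theorem says: Theorem~\ref{thPD} constrains the \emph{mean} of the periodic data, not its range, and nothing prevents a periodic profile from crossing a value in $F$. So the deduction that the constant values of $\int_{\T^d}(w_\infty-\alpha^+)^+\,dy$ and $\int_{\T^d}(\alpha^--w_\infty)^+\,dy$ vanish is unsupported.

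The paper obtains the trapping by an explicit barrier construction that you should compare with. Set $v_r^+(x)=\sup_{e\in G}v(x+re)$ and $v_r^-(x)=\inf_{e\in G}v(x+re)$; these are $rG$-periodic, satisfy $v_r^-\le v\le v_r^+$, and---this is where $v\in L_0^\infty(\R^n)$ enters---their mean values $\varepsilon_r^\pm$ over the fundamental cell $P_r$ of the sublattice $rG$ tend to $0$ as $r\to\infty$. For $r$ large one can therefore form the $rG$-periodic initial data $u_0^+=p+v_r^++\alpha^+-m-\varepsilon_r^+$ and $u_0^-=p+v_r^--(m-\alpha^-+\varepsilon_r^-)$, which sandwich $u_0$ and have mean values \emph{exactly} $\alpha^+$ and $\alpha^-$. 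Since $(rG)'=\frac1rG'$ and membership in $F$ is invariant under scaling of $\xi$, the hypothesis $\alpha^\pm\in F$ is exactly condition (\ref{gn}) for these periodic problems, so the already-proved periodic decay theorem applies and gives $u^\pm(t,\cdot)\to\alpha^\pm$ in the $X$-norm; the comparison principle for periodic barriers then yields $u^-\le u\le u^+$ and the stated bound. In short, the lattice refinement $G\rightsquigarrow rG$ together with the sup/inf envelopes of $v$ is the device that converts ``$\alpha^\pm\in F$'' into an applicable instance of the periodic decay theorem; your proposal lacks a substitute for it.
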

\begin{proof}
Let $e_k$, $k=1,\ldots,n$, be a basis of the lattice $G$. We define for $r\in\N$ the parallelepiped
$$
P_r=\left\{ \ x=\sum_{k=1}^n x_ke_k: \ -r/2\le x_k<r/2, k=1,\ldots,n \ \right\}.
$$
It is clear that $P_r$ is a fundamental parallelepiped for a lattice $rG\subset G$.
We introduce the functions
$$
v_r^+(x)=\sup_{e\in G} v(x+re), \quad v_r^-(x)=\inf_{e\in G} v(x+re), \quad V_r(x)=\sup_{e\in G} |v(x+re)|.
$$
Since $G$ is countable, these functions are well-defined in $L^\infty(\R^n)$,
and $|v_r^\pm|\le V_r(x)\le C_0\doteq\|v\|_\infty$ for a.e. $x\in\R^n$. It is clear that $v_r^\pm(x)$ are $rG$-periodic and
\begin{equation}\label{p2v}
v_r^-(x)\le v(x)\le v_r^+(x).
\end{equation}
Let us show that under condition (\ref{van})
\begin{equation}\label{l3}
M_r=\frac{1}{|P_r|}\int_{P_r} V_r(x)dx\to 0 \ \mbox{ as } r\to+\infty.
\end{equation}
For that we fix $\varepsilon>0$ and define the set $A=\{ \ x\in\R^n: \ |v(x)|>\varepsilon \ \}$.
In view of (\ref{van}) the measure of this set is finite, $\meas A=q<+\infty$. We also define the sets
$$
A_r^e=\{ \ x\in P_r: \ x+re\in A \ \}\subset P_r, \quad r>0, \ e\in G, \quad A_r=\bigcup_{e\in G} A_r^e.
$$
By the translation invariance of Lebesgue measure and the fact that $\R^n$ is the disjoint union of the sets $re+P_r$, $e\in G$, we have
$$
\sum_{e\in G}\meas A_r^e=\sum_{e\in G}\meas (re+A_r^e)=\sum_{e\in G}\meas (A\cap (re+P_r))=\meas A=q.
$$
This implies that
\begin{equation}\label{Ar}
\meas A_r\le\sum_{e\in G}\meas A_r^e=q.
\end{equation}
If $x\notin A_r$ then $|v(x+re)|\le\varepsilon$ for all $e\in G$, which implies that $V_r(x)\le\varepsilon$. Taking (\ref{Ar}) into account, we find
$$
\int_{P_r} V_r(x)dx=\int_{A_r} V_r(x)dx+\int_{P_r\setminus A_r} V_r(x)dx\le C_0\meas A_r+\varepsilon\meas P_r\le C_0q+\varepsilon|P_r|.
$$
It follows from this estimate that
$$\limsup_{r\to+\infty} M_r\le\lim_{r\to+\infty}\left(\frac{C_0q}{|P_r|}+\varepsilon\right)=\varepsilon$$ and since $\varepsilon>0$ is arbitrary, we conclude that (\ref{l3}) holds.
Let
$$
\varepsilon_r^\pm=\frac{1}{|P_r|}\int_{P_r} v_r^\pm(x)dx
$$
be mean values of $rG$-periodic functions $v_r^\pm(x)$.
In view of (\ref{l3})
\begin{equation}\label{l4}
|\varepsilon_r^\pm|\le M_r\mathop{\to}_{r\to\infty} 0.
\end{equation}
By (\ref{l4}) we claim that $|\varepsilon_r^\pm|<\min(\alpha^+-m,m-\alpha^-)$ for sufficiently large $r\in\N$.
We introduce for such $r$ the $rG$-periodic functions
$$
u_0^+(x)=p(x)+v_r^+(x)+\alpha^+-m-\varepsilon_r^+, \quad u_0^-(x)=p(x)+v_r^-(x)-(m-\alpha^-+\varepsilon_r^-)
$$
with the mean values $\alpha^+,\alpha^-$, respectively. In view of (\ref{p2v}) and the conditions
$\alpha^+-m-\varepsilon_r^+>0$, $m-\alpha^-+\varepsilon_r^->0$, we have
\begin{equation}\label{p2}
u_0^-(x)\le u_0(x)\le u_0^+(x).
\end{equation}
Let $u^\pm$ be unique (by \cite[Theorem~1.3]{PaJDE2}) e.s. of (\ref{1}), (\ref{2}) with initial functions
$u_0^\pm$, respectively. Taking into account that $(rG)'=\frac{1}{r}G'$, we see that condition (\ref{gn}), corresponding to the lattice $rG$ and the mean values $\alpha^-,\alpha^+$, is satisfied. By Theorem~\ref{thPD} (or \cite[Theorem~1.1]{PaJDE1}) we find that
\begin{equation}\label{l5}
\esslim_{t\to+\infty}\int_{P_r}|u^\pm(t,x)-\alpha^\pm|dx=0.
\end{equation}
By the periodicity, for each $y\in\R^n$
$$
\int_{y+P_r}|u^\pm(t,x)-\alpha^\pm|dx=\int_{P_r}|u^\pm(t,x)-\alpha^\pm|dx,
$$
which readily implies that for $V=\Int P_r$
$$
\|u^\pm(t,x)-\alpha^\pm\|_V=\int_{P_r}|u^\pm(t,x)-\alpha^\pm|dx.
$$
In view of Lemma~\ref{equ} we have the estimate
$$\|u^\pm(t,x)-\alpha^\pm\|_X\le C\int_{P_r}|u^\pm(t,x)-\alpha^\pm|dx, \ C=C_r=\const.$$
By (\ref{l5}) we claim that
\begin{equation}\label{l5a}
\esslim_{t\to+\infty}\|u^\pm(t,\cdot)-\alpha^\pm\|_X=0.
\end{equation}
Let $u=u(t,x)$ be an e.s. of the original problem (\ref{1}), (\ref{2}) with initial data $u_0(x)$. Since the functions $u_0^\pm$ are periodic, then it follows from (\ref{p2}) and the comparison principle \cite[Theorem~1.3]{PaJDE2} that $u^-\le u\le u^+$ a.e. in $\Pi$. This readily implies the relation
\begin{align}\label{l6}
\|u(t,\cdot)-m\|_X\le \|u^-(t,\cdot)-m\|_X+\|u^+(t,\cdot)-m\|_X\le \nonumber\\
\|u^-(t,x)-\alpha^-\|_X+\|u^+(t,x)-\alpha^+\|_X+c(\alpha^+-m+m-\alpha^-),
\end{align}
where $c\le 2^n$ is Lebesgue measure of the unit ball $|x|<1$ in $\R^n$.
In view of (\ref{l5a}) it follows from (\ref{l6}) in the limit as $t\to+\infty$
that
$$
\esslimsup_{t\to+\infty}\|u(t,\cdot)-m\|_X\le c(\alpha^+-\alpha^-)\le 2^n(\alpha^+-\alpha^-),
$$
as was to be proved.
\end{proof}

\subsection{Proof of Theorem~\ref{thM}}

We are going to establish that the statement of Proposition~\ref{pro1} remains valid in the case of arbitrary $G$. We will suppose that $\dim H<n$, otherwise $p\equiv\const$ and this case has been already considered in Introduction.

\begin{proposition}\label{pro2} Assume that $m\in(\alpha^-,\alpha^+)$, where $\alpha^\pm\in F$, and $u=u(t,x)$ is an e.s. of (\ref{1}), (\ref{2}). Then,
\begin{equation}\label{mest}
\esslimsup_{t\to+\infty}\|u(t,\cdot)-m\|_X\le 2^{n+3}(\alpha^+-\alpha^-).
\end{equation}
\end{proposition}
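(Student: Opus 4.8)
The plan is to carry over the comparison scheme of Proposition~\ref{pro1} to a non‑lattice group $G=H\oplus L_0$ with $\dim H\ge 1$. The structural obstruction to applying Proposition~\ref{pro1} directly is twofold: the continuous part $H$ forbids tiling $\R^n$ by a full‑rank lattice, so the averaging estimate (\ref{l3}) is unavailable as stated; and along the directions of $H$ the set $F$ carries no information, so the periodic decay of Theorem~\ref{thPD} cannot be invoked there. The resolution must therefore draw the decay in the $H$–directions entirely from the vanishing of the perturbation (via (\ref{van})) rather than from a nonlinearity–diffusivity mechanism.

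First I would perform the linear change of variables of Theorem~\ref{thPD}: choose a nondegenerate $Q$ with $Q(H)=\R^{n-d}$ and write $x=(x',x'')$, $x'\in H^\perp\cong\R^d$, $x''\in H\cong\R^{n-d}$. Then the periodic part becomes $\tilde p=\tilde p(x')$, periodic with the lattice $\tilde L=\tilde G\cap\R^d$, the perturbation $\tilde v=v\circ Q^{-1}$ still lies in $L_0^\infty(\R^n)$, and by Lemma~\ref{lem2} the hypothesis $\alpha^\pm\in F$ survives for all $\zeta\in\tilde L'$, i.e.\ only in the $x'$–directions. Since a linear change of variables preserves the equivalence class of the Stepanov norm (Lemma~\ref{equ}, applied to the images of balls under $Q$), it suffices to prove the estimate for the transformed problem, the passage back to $\|\cdot\|_X$ costing only a bounded geometric factor.

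Next I would replace the lattice envelopes of Proposition~\ref{pro1} by their $x'$–periodic analogues. For $r\in\N$ set $v_r^+(x',x'')=\sup_{e\in\tilde L}\tilde v(x'+re,x'')$, $v_r^-(x',x'')=\inf_{e\in\tilde L}\tilde v(x'+re,x'')$, and $V_r=\sup_{e\in\tilde L}|\tilde v(x'+re,x'')|$; these are $r\tilde L$–periodic in $x'$, bounded by $\|v\|_\infty=C_0$, and satisfy $v_r^-\le\tilde v\le v_r^+$. The analogue of (\ref{l3}) is obtained sliced in $x''$: for a.e.\ fixed $x''$ the function $\tilde v(\cdot,x'')$ lies in $L_0^\infty(\R^d)$ by Fubini, so the argument of Proposition~\ref{pro1} gives, for every $\varepsilon>0$, the bound $|\tilde P_r|^{-1}\int_{\tilde P_r}V_r(x',x'')\,dx'\le\varepsilon+C_0\,q_\varepsilon(x'')/|\tilde P_r|$, where $q_\varepsilon(x'')=\meas_d\{x':|\tilde v(x',x'')|>\varepsilon\}$ and $\tilde P_r$ is a fundamental domain of $r\tilde L$. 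Integrating in $x''$ and using $\int_{\R^{n-d}}q_\varepsilon(x'')\,dx''=\meas_n\{|\tilde v|>\varepsilon\}<\infty$ shows that the $x'$–mean $\mu_r^\pm(x'')=|\tilde P_r|^{-1}\int_{\tilde P_r}v_r^\pm\,dx'$ satisfies $\int_{\R^{n-d}}(|\mu_r^\pm(x'')|-\varepsilon)_+\,dx''\to 0$ as $r\to\infty$; in particular $\mu_r^\pm\in L_0^\infty(\R^{n-d})$ with an envelope that becomes arbitrarily small with $r$.

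Finally I would introduce the $r\tilde L$–periodic–in–$x'$ comparison data $\tilde u_0^\pm=\tilde p(x')+v_r^\pm(x',x'')+(\alpha^\pm-m)$, so that $\tilde u_0^-\le\tilde u_0\le\tilde u_0^+$, and by the comparison principle \cite[Theorem~1.3]{PaJDE2} the corresponding entropy solutions obey $\tilde u^-\le\tilde u\le\tilde u^+$. The heart of the matter is to bound $\esslimsup_{t\to+\infty}\|\tilde u^\pm(t,\cdot)-\alpha^\pm\|_X$ by the envelope of $\mu_r^\pm$. This is exactly where the genuine obstacle lies: the degenerate equation couples $x'$ and $x''$, so one cannot simply freeze $x''$ and apply Proposition~\ref{pro1} slice by slice. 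I expect to resolve it by showing that the reduced dynamics in the $x''$–directions merely transports the $x'$–average over $\T^d$ without altering it, so that for a.e.\ $t$ the $x'$–average of $\tilde u^\pm$ remains $\alpha^\pm+\mu_r^\pm(x'')$ (the mean–conservation of \cite[Corollary~2.1]{PaJDE1}), while decay of $\tilde u^\pm$ to that $x'$–average on each torus $\T^d$ comes from the nonlinearity–diffusivity in the $x'$–directions through Theorem~\ref{thPD}. Combining the $x'$–decay with the smallness of $\mu_r^\pm$ in $L_0^\infty(\R^{n-d})$, and letting first $t\to+\infty$ and then $r\to+\infty$, yields $\esslimsup_t\|\tilde u(t,\cdot)-m\|_X\le c(\alpha^+-\alpha^-)$; the two–scale covering of the unit ball (an $x'$–fundamental domain times an $x''$–cube) together with the change of variables accounts for the enlarged constant $2^{n+3}$.
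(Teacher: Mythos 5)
There is a genuine gap, and it sits exactly at the step you yourself flag as ``the heart of the matter.'' Your comparison data $\tilde u_0^\pm=\tilde p(x')+v_r^\pm(x',x'')+(\alpha^\pm-m)$ retains a nontrivial dependence on $x''$ (the $H$--directions), so it is \emph{not} periodic with a group of periods spanning $\R^n$ modulo $H$: it is precisely a perturbed periodic function of the type the proposition is trying to treat. Consequently neither Theorem~\ref{thPD} nor \cite[Theorem~1.1]{PaJDE1} applies to $\tilde u^\pm$ (invoking them is circular), and the mean--conservation of \cite[Corollary~2.1]{PaJDE1} concerns periodic solutions of a genuinely lower--dimensional problem, which your $\tilde u^\pm$ are not, since the degenerate equation couples $x'$ and $x''$ and no nonlinearity--diffusivity information is available in the $H$--directions to produce any decay or decoupling there. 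The claim that the $x''$--dynamics ``merely transports the $x'$--average without altering it'' is unsubstantiated and is not a consequence of anything established in the paper. Your sliced version of (\ref{l3}) and the smallness of $\mu_r^\pm(x'')$ are fine as computations, but they do not feed into any applicable decay theorem.

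The paper's resolution is different and avoids the coupling entirely: for compactly supported $v$ it replaces the perturbation by the \emph{$H$--constant} envelopes $v^\pm=\pm\|v\|_\infty\chi_A$ with $A=H+\supp v=H\oplus K$. Because $u_0^\pm=p+v^\pm$ is then constant along $H$, uniqueness of the largest/smallest e.s.\ under translations by $e\in H$ forces $u^\pm(t,x)=u^\pm(t,x')$, and the problem genuinely reduces to a $d$--dimensional one on $H^\perp$ where $L_0$ is a full--rank lattice and $v^\pm(x')=\pm\|v\|_\infty\chi_K(x')$ is compactly supported; Proposition~\ref{pro1} then applies directly, at the cost of a factor $2$ from sandwiching $u$ between $u^-$ and $u^+$ (hence $2^{n+1}$). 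The passage to general $v\in L_0^\infty(\R^n)$ is then done in two further approximation steps you do not have: first to $v\in L^1\cap L^\infty$ via the $L^1$--contraction of the largest/smallest e.s.\ (another factor $2$), then to $v\in L_0^\infty$ via the truncations $v_+=\max(v-\delta,0)$, $v_-=\min(v+\delta,0)$, which lie in $L^1$ and cost only an additional $O(\delta)$ that is sent to zero. To repair your argument you would need to abandon the $x''$--dependent envelopes and adopt some device that makes the comparison data constant along $H$ before invoking any periodic decay result.
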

\begin{proof}
Suppose firstly that $v(x)\in L^\infty(\R^n)$ is a finite function, that is, its closed support $\supp v$ is compact.
Let $A=H+\supp v=H\oplus K$, where $K$ is the orthogonal projection of $\supp v$ on the space $H^\perp$.
We define the functions $v^\pm(x)=\pm\|v\|_\infty\chi_A(x)$, where $\chi_A(x)$ is the indicator function of the set $A$.
We define functions $u_0^\pm(x)=p(x)+v^\pm(x)$ and let $u^-(t,x)$ be the smallest e.s. of (\ref{1}), (\ref{2}) with initial data $u_0^-(x)$, $u^+(t,x)$ be the largest e.s. of (\ref{1}), (\ref{2}) with initial data $u_0^+(x)$, existence of such e.s. was established in \cite[Theorems~1.1]{PaJDE2}. Since $u_0^-(x)\le u_0(x)\le u_0^+(x)$ a.e. on $\R^n$, we derive that
$u^-\le u\le u^+$ by the property of monotone dependence of the smallest and the largest e.s. on initial data, cf.
\cite[Theorem~1.2]{PaJDE2}. Observe that the initial functions $u_0^\pm(x)$ are constant in direction $H$. Therefore, for any $e\in H$, the functions $u^\pm(t,x+e)$ are the largest and the smallest e.s. of the same problems as $u^\pm(t,x)$. By the uniqueness we claim that $u^\pm(t,x+e)=u^\pm(t,x)$ a.e. in $\Pi$. Hence, $u^\pm(t,x)=u^\pm(t,x')$, $x'=\pr_{H^\perp} x$. As is easy to see, $u^\pm(t,x')$ are the largest and the smallest e.s. of $d$-dimensional problem
$$
u_t+\div_{x'}(\tilde\varphi(u)-\tilde a(u)\nabla_{x'}u)=0, \quad u(0,x')=u_0^\pm(x')
$$
on the subspace $H^\perp$, where $\tilde\varphi(u)=P\varphi(u)$, $\tilde a(u)=Pa(u)P^*$, $P=\pr_{H^\perp}$ being the orthogonal projection on the space $H^\perp$. In the same way as in the proof of Theorem~\ref{thPD} this problem can be written in the standard way (like (\ref{1r})~) by an appropriate change of the space variables. Since the initial functions $u_0^\pm(x')=p(x')+v^\pm(x')$, where $p(x')$ is periodic with the lattice of periods $L_0$, while $v^\pm(x')$ are bounded functions with compact support $K$ (so that $v^\pm(x')\in L_0^\infty(H^\perp)$), we may apply Proposition~\ref{pro1}. By this proposition,
\begin{align}\label{3}
\esslimsup_{t\to +\infty} \|u^\pm(t,x)-m\|_X\le\esslimsup_{t\to +\infty} 2^{n-d}\|u^\pm(t,x')-m\|_X\le\nonumber\\  2^{n-d}2^d(\alpha^+-\alpha^-)= 2^n(\alpha^+-\alpha^-),
\end{align}
where the first $X$-norm is taken in $L^\infty(\R^n)$ while the second $X$-norm is in $L^\infty(H^\perp)$.
Since the e.s. $u(t,x)$ is situated between $u^-$ and $u^+$, then $$\|u(t,\cdot)-m\|_X\le\|u^+(t,\cdot)-m\|_X+\|u^-(t,\cdot)-m\|_X$$ and in view of (\ref{3})
\begin{equation}\label{4}
\esslimsup_{t\to +\infty} \|u(t,\cdot)-m\|_X\le 2^{n+1}(\alpha^+-\alpha^-).
\end{equation}
Now we suppose that $v\in L^1(\R^n)\cap L^\infty(\R^n)$. For fixed $\varepsilon>0$ we can find a function $\tilde v\in L^\infty(\R^n)$ with compact support such that $\|v-\tilde v\|_1\le\varepsilon$. We denote by $u^+=u^+(t,x)$, $u^-=u^-(t,x)$ the largest and the smallest e.s. of (\ref{1}), (\ref{2}) with initial function $u_0=p(x)+v(x)$. Similarly,
by $\tilde u^+=\tilde u^+(t,x)$, $\tilde u^-=\tilde u^-(t,x)$ we denote the largest and the smallest e.s. of (\ref{1}), (\ref{2}) with initial function $\tilde u_0=p(x)+\tilde v(x)$. It is known, cf. \cite[Theorems~1.2]{PaJDE2}, that the largest and the smallest e.s. exhibit the $L^1$-contraction property. In particular, for a.e. $t>0$
\begin{equation}\label{5}
\int_{\R^n}|u^\pm(t,x)-\tilde u^\pm(t,x)|dx\le\int_{\R^n}|u_0(x)-\tilde u_0(x)|dx=\|v-\tilde v\|_1<\varepsilon.
\end{equation}
Since the function $\tilde v$ has finite support, relation (\ref{4}) holds for the e.s. $\tilde u^\pm(t,x)$, i.e.,
\begin{equation}\label{6}
\esslimsup_{t\to +\infty} \|\tilde u^\pm(t,\cdot)-m\|_X\le 2^{n+1}(\alpha^+-\alpha^-).
\end{equation}
In view of (\ref{5})
$$\|u^\pm(t,\cdot)-\tilde u^\pm(t,\cdot)\|_X\le \|u^\pm(t,\cdot)-\tilde u^\pm(t,\cdot)\|_1<\varepsilon$$
and (\ref{6}) implies the estimates
$$
\esslimsup_{t\to +\infty} \|u^\pm(t,\cdot)-m\|_X\le 2^{n+1}(\alpha^+-\alpha^-)+\varepsilon,
$$
and since $\varepsilon>0$ is arbitrary, we find that
\begin{equation}\label{7}
\esslimsup_{t\to +\infty} \|u^\pm(t,\cdot)-m\|_X\le 2^{n+1}(\alpha^+-\alpha^-).
\end{equation}
Since $u^-\le u\le u^+$, then $\|u(t,\cdot)-m\|_X\le \|u^+(t,\cdot)-m\|_X+\|u^-(t,\cdot)-m\|_X$ and it follows from (\ref{7}) that
\begin{equation}\label{8}
\esslimsup_{t\to +\infty} \|u(t,\cdot)-m\|_X\le 2^{n+2}(\alpha^+-\alpha^-).
\end{equation}
In the general case $v\in L_0^\infty(\R^n)$ we choose such $\delta>0$ that $\alpha^-<m-\delta<m+\delta<\alpha^+$ and set
$v_+(x)=\max(v(x)-\delta,0)$, $v_-(x)=\min(v(x)+\delta,0)$. Observe that these functions vanish outside of the set $|v(x)|>\delta$ of finite measure. Therefore, $v_\pm\in L^1(\R^n)$. Obviously,
\begin{equation}\label{9}
u_0(x)\le u_{0+}(x)=p(x)+\delta+v_+(x), \quad u_0(x)\ge u_{0-}(x)=p(x)-\delta+v_-(x).
\end{equation}
Notice that $p(x)\pm\delta$ are periodic functions with the same group of periods $G$ as $p(x)$ and with the mean values
$m\pm\delta\in (\alpha_-,\alpha_+)$. Let $u_+(t,x)$ be the largest e.s. of problem (\ref{1}), (\ref{2}) with initial function
$u_{0+}(x)$, and $u_-(t,x)$ be the smallest  e.s. of this problem with initial data
$u_{0-}(x)$. In view of (\ref{9}), we have $u_-(t,x)\le u(t,x)\le u_+(t,x)$. As we have already established, the e.s.
$u_\pm(t,x)$ satisfy relation (\ref{8}):
$$
\esslimsup_{t\to +\infty} \|u_\pm(t,\cdot)-(m\pm\delta)\|_X\le 2^{n+2}(\alpha^+-\alpha^-).
$$
This implies that
\begin{equation}\label{10}
\esslimsup_{t\to +\infty} \|u_\pm(t,\cdot)-m\|_X\le 2^{n+2}(\alpha^+-\alpha^-)+2^n\delta,
\end{equation}
where we use again the fact that measure of a unit ball in $\R^n$ is not larger than $2^n$.
Since the e.s. $u$ is situated between $u_-$ and $u_+$, we derive from (\ref{10}) that
$$
\esslimsup_{t\to +\infty} \|u(t,\cdot)-m\|_X\le 2^{n+3}(\alpha^+-\alpha^-)+2^{n+1}\delta,
$$
and to complete the proof it only remains to notice that a sufficiently small $\delta>0$ is arbitrary.
\end{proof}
Under the assumption of Theorem~\ref{thM} the value $\alpha^+-\alpha^-$ in (\ref{mest}) may be arbitrarily small. Therefore, (\ref{dec}) follows from (\ref{mest}). This completes the proof of our main Theorem~\ref{thM}.
Remark that in the case when the perturbation $v\ge 0$ ($v\le 0$) we can weaken the nonlinearity-diffusivity assumption in Theorem~\ref{thM} by the requirement $\forall b>m$ $(m,b)\cap F\not=\emptyset$ (respectively, $\forall a<m$ $(a,m)\cap F\not=\emptyset$). However, it is not possible, to weaken this assumption by the condition $m\in F$, as in the periodic case $v\equiv 0$. In the hyperbolic case $a\equiv 0$ this was confirmed by \cite[Example~2.6]{PaJHDE}. In the next section we construct another example for the parabolic equation $u_t-A(u)_{xx}=0$.

\section{Exactness of the nonlinearity-diffusivity assumption for a parabolic equation.}\label{ex}
In the one-dimensional case $n=1$ we consider the following purely parabolic equation
\begin{equation}\label{Df1}
u_t-A(u)_{xx}=0
\end{equation}
with the diffusion function $A(u)=u^+=\max(0,u)$, so that $a(u)=A'(u)$ is the Heaviside function (in fact, the problem (\ref{Df1}), (\ref{2}) is the Stefan problem). Since the flux vector is absent, the set $F$ in the nonlinearity-diffusivity condition consists of such points $u_0$ that
$A(u)$ is not constant in any vicinity $|u-u_0|<\delta$ of $u_0$. It is clear that $F=[0,+\infty)$. First, we are going to construct a $x$-periodic e.s. $u(t,x)$. It is known that e.s. of equation (\ref{Df1}) is characterized by the Carrillo conditions \cite{Car}: $A(u)_x\in L^2_{loc}(\Pi)$, and $\forall k\in\R$
\begin{equation}\label{DfE}
|u-k|_t-(\sgn(u-k)A(u)_x)_x=|u-k|_t-|A(u)-A(k)|_{xx}\le 0 \mbox{ in } \D'(\Pi).
\end{equation}
For piecewise $C^2$-smooth e.s the above conditions imply the following restrictions on discontinuity lines $x=x(t)$ similar to the jump conditions in \cite[Theorem~1.1]{VH}:
\begin{align}\label{Df2a}
[A(u)]=0, \\ \label{Df2b}
\forall k\in\R \quad [|u-k|]\nu_0-[\sgn(u-k)A(u)_x]\nu_1\le 0,
\end{align}
where $[w]=w_+-w_-$, $w_\pm=w_\pm(t)=w(t,x(t)\pm)$, is the jump of a function $w=w(t,x)$ across the line $x=x(t)$,
and $\nu=(\nu_0,\nu_1)$ is a normal vector on this line, directed from $w_-$ to $w_+$ (we may take $\nu=(-x'(t),1)$).
Conditions (\ref{Df2a}), (\ref{Df2b}) follow from entropy relation (\ref{DfE}) by application to a nonnegative test function and integration by parts (with the help of Green's formula). It is not difficult to verify that these conditions  together with the requirement that
$u$ is a classical solution in domains of its smoothness are equivalent to the statement that $u$ is an e.s. of (\ref{Df1}).
Let us make condition (\ref{Df2b}) more precise. Taking $k>\max(u_-,u_+)$, $k<\min(u_-,u_+)$, we readily arrive at the Rankine-Hugoniot relation
\begin{equation}\label{DfRG}
[u]\nu_0-[A(u)_x]\nu_1=0.
\end{equation}
Assuming (for fixed $t$) that $u_+>u_-$ and putting relation (\ref{Df2b}) together with (\ref{DfRG}), we arrive at the inequality
$$
(u_+-k)\nu_0 - (A(u)_x)_+\nu_1\le 0 \quad \forall k\in [u_-,u_+].
$$
This inequality is valid whenever it holds in the end-points $k=u_\pm$. Taking $k=u_+$, we obtain $(A(u)_x)_+\ge 0$. For $k=u_-$ we have $(u_+-u_-)\nu_0 -(A(u)_x)_+\nu_1\le 0$, and subtracting (\ref{DfRG}), we obtain that $(A(u)_x)_-\ge 0$, Similarly, in the case $u_+<u_-$ we derive the inequalities  $(A(u)_x)_\pm\le 0$. Hence,
\begin{equation}\label{DfEI}
\sgn(u_+-u_-)(A(u)_x)_\pm\ge 0
\end{equation}
The obtained conditions are different from the jump conditions of \cite[Theorem~1.1]{VH}. This connected with the fact that our diffusion function $A(u)$ is merely Lipschitz and the derivative $A(u)_x$ may have nontrivial traces at a discontinuity line.

To construct the desired solution, we need to solve the following initial-boundary value problem for the heat equation
$u_t=u_{xx}$ in the domain $t>0$, $|x|<r(t)=2-e^{-\alpha t}$, where the positive constant $\alpha$ will be indicated later, with homogeneous Dirichlet condition $u(t,\pm r(t))=0$ and with the initial condition $u(0,x)=\varphi(x)\in C_0^\infty((-1,1))$, where the initial function is even, $\varphi(-x)=\varphi(x)$.

Making the change $y=x/r(t)$ and denoting $v=v(t,y)=u(t,yr(t))$, we reduce our problem to the standard problem in the fixed segment $|y|\le 1$
\begin{equation}\label{Df3}
v_t=\frac{1}{r^2}v_{yy}+\frac{r'}{r}yv_y, \quad v(t,\pm 1)=0, \ v(0,y)=\varphi(y).
\end{equation}
Notice that the coefficient $1/4<\frac{1}{r^2}\le 1$ and in correspondence with the classic results \cite{LSU}, this problem admits a unique classical solution $v(t,y)$. Since $v(t,-y)$ is a solution of the same problem then, by the uniqueness, $v(t,-y)=v(t,y)$. Moreover, this solution is bounded by the maximum principle. Differentiating (\ref{Df3}) with respect to the space variable $y$, we derive that the functions $w=v_y$ and $p=v_{yy}$ satisfies the equations
\begin{align}\label{Df3d1}
w_t=\frac{1}{r^2}w_{yy}+\frac{r'}{r}(yw_y+w), \\
\label{Df3d2}
p_t=\frac{1}{r^2}p_{yy}+\frac{r'}{r}(yp_y+2p).
\end{align}
It follows from equations (\ref{Df3}), (\ref{Df3d1}) and the boundary condition $u(t,\pm 1)=0$ that at the boundary points $y=\pm 1$
\begin{align}\label{Df3b1}
p=v_{yy}=-rr'yv_y=-rr'yw, \\
\label{Df3b2}
\frac{1}{r^2}p_y=\frac{1}{r^2}w_{yy}=w_t-\frac{r'}{r}(yw_y+w)=w_t-\frac{r'}{r}(yp+w).
\end{align}
We multiply (\ref{Df3d2}) by $2p$ and integrate over the variable $y$. This yields the relation
\begin{equation}\label{Df4}
\frac{\partial}{\partial t}\int_{-1}^1 p^2dy=\frac{2}{r^2}\int_{-1}^1 pp_{yy}dy+\frac{r'}{r}\int_{-1}^1 (p^2)_yydy+\frac{4r'}{r}\int_{-1}^1 p^2dy.
\end{equation}
Integrating by parts, we obtain
$$
\int_{-1}^1 pp_{yy}dy=-\int_{-1}^1 (p_y)^2dy+pp_y|_{y=-1}^{y=1}, \quad \int_{-1}^1 (p^2)_yydy=-\int_{-1}^1 p^2dy+p^2y|_{y=-1}^{y=1}.
$$
Placing these equalities into (\ref{Df4}) and taking into account relations (\ref{Df3b2}), we arrive at the relation
\begin{align*}
\frac{\partial}{\partial t}\int_{-1}^1 p^2dy=-\frac{2}{r^2}\int_{-1}^1 (p_y)^2dy+2(w_t-\frac{r'}{r}w)p|_{y=-1}^{y=1}-\frac{2r'}{r}p^2y|_{y=-1}^{y=1}+ \frac{3r'}{r}\int_{-1}^1 p^2dy+ \\ \frac{r'}{r}p^2y|_{y=-1}^{y=1}=
-\frac{2}{r^2}\int_{-1}^1 (p_y)^2dy+\frac{3r'}{r}\int_{-1}^1 p^2dy-2rr'(w_t-\frac{r'}{r}w)wy|_{y=-1}^{y=1}-\frac{r'}{r}p^2y|_{y=-1}^{y=1}\le \\
-\frac{2}{r^2}\int_{-1}^1 (p_y)^2dy+\frac{3r'}{r}\int_{-1}^1 p^2dy-rr'\sum_{y=\pm 1}(w^2)_t(t,y)+2(r')^2\sum_{y=\pm 1}w^2(t,y),
\end{align*}
where we drop the non-positive term $\displaystyle-\frac{r'}{r}p^2y|_{y=-1}^{y=1}$.
Since $rr'(w^2)_t=(rr'w^2)_t-(rr')'w^2=(rr'w^2)_t-(rr''+(r')^2)w^2$, this relation can be written as
\begin{align}\label{Df6}
\frac{\partial}{\partial t}\left(\int_{-1}^1 p^2dy+rr'\sum_{y=\pm 1}w^2(t,y)\right)\le
-\frac{2}{r^2}\left(\int_{-1}^1 (p_y)^2dy+ \sum_{y=\pm 1}p^2(t,y)\right)+\nonumber\\ \frac{2}{r^2}\sum_{y=\pm 1} p^2(t,y)+\frac{3r'}{r}\int_{-1}^1 p^2dy+(rr''+3(r')^2)\sum_{y=\pm 1}w^2(t,y)\le\nonumber\\
-\frac{2}{r^2}\left(\int_{-1}^1 (p_y)^2dy+\sum_{y=\pm 1}p^2(t,y)\right)+\frac{3r'}{r}\int_{-1}^1 p^2dy+
5(r')^2\sum_{y=\pm 1}w^2(t,y),
\end{align}
where we used that $r''=-\alpha^2e^{-\alpha t}<0$ and that $\displaystyle\frac{2}{r^2}\sum_{y=\pm 1} p^2(t,y)=2(r')^2\sum_{y=\pm 1}w^2(t,y)$, by (\ref{Df3b1}).
Now we apply the Poincare inequality
$$
\int_{-1}^1 p^2dy\le c\left(\int_{-1}^1 (p_y)^2dy+\sum_{y=\pm 1}p^2(t,y)\right), \ c=\const,
$$
and derive
\begin{align}\label{Df7}
\frac{\partial}{\partial t}\left(\int_{-1}^1 p^2dy+rr'\sum_{y=\pm 1}w^2(t,y)\right)\le\nonumber\\
\left(-\frac{2}{cr^2}+\frac{3r'}{r}\right)\int_{-1}^1 p^2dy+5(r')^2\sum_{y=\pm 1}w^2(t,y)=\nonumber\\
\left(-\frac{1}{cr^2}+\frac{3r'}{r}\right)\int_{-1}^1 p^2dy-\frac{1}{cr^2}\int_{-1}^1 p^2dy+5(r')^2\sum_{y=\pm 1}w^2(t,y).
\end{align}
Since $w=u_y$ is an odd function and $p=w_y$, then $w(t,0)=0$ and $w(t,y)=\int_0^y p(t,s)ds$. By Jensen's inequality, this implies the estimate
$(w(t,y))^2\le\int_0^y (p(t,s))^2ds$ for all $y\in [-1,1]$. In particular,
$$\sum_{y=\pm 1}w^2(t,y)\le \int_{-1}^1 (p(t,y))^2dy$$ and it follows from (\ref{Df7}) that
\begin{align}\label{Df8}
\frac{\partial}{\partial t}\left(\int_{-1}^1 p^2dy+rr'\sum_{y=\pm 1}w^2(t,y)\right)\le\nonumber \\
-\left(\frac{1}{cr^2}-\frac{3r'}{r}\right)\int_{-1}^1 p^2dy-\left(\frac{1}{cr^2}-5(r')^2\right)\sum_{y=\pm 1}w^2(t,y).
\end{align}
Observe that $\frac{1}{cr^2}>\frac{1}{4c}$ while $\frac{3r'}{r}\le 3\alpha$, $5(r')^2\le 5\alpha^2$. Therefore, we can choose $\alpha>0$ so small that $\frac{1}{cr^2}-\frac{3r'}{r}>3\alpha$, $\frac{1}{cr^2}-5(r')^2>3\alpha$, $rr'<2\alpha<1$. Then, it follows from (\ref{Df8}) that
\begin{align*}
\frac{\partial}{\partial t}\left(\int_{-1}^1 p^2dy+rr'\sum_{y=\pm 1}w^2(t,y)\right)\le -3\alpha\left(\int_{-1}^1 p^2dy+rr'\sum_{y=\pm 1}w^2(t,y)\right),
\end{align*}
which implies the estimate
\begin{equation}\label{Df9}
\int_{-1}^1 p^2dy+rr'\sum_{y=\pm 1}w^2(t,y)\le Ce^{-3\alpha t}, \ C=\const.
\end{equation}
Since $rr'=\alpha re^{-\alpha t}>\alpha e^{-\alpha t}$, we conclude that
\begin{equation}\label{Df10}
|v_y(t,y)|=|w(t,y)|\le C_1e^{-\alpha t}\le C_2 r'(t) \ \forall t>0, y=\pm 1, \quad C_1,C_2=\const.
\end{equation}
It also follows from (\ref{Df9}) that
\begin{equation}\label{Df11}
v(t,y)\le\const\left(\int_{-1}^1 p^2dy\right)^{1/2}\le\const\cdot e^{-3\alpha t/2}\mathop{\to}_{t\to +\infty} 0.
\end{equation}
\medskip
We are going to construct a periodic e.s. of (\ref{Df1}) with period $5$ such that on the segment $[-5/2,5/2]$ it has the following structure: $u(t,x)$ is the described above solution of the Cauchy-Dirichlet problem for the heat equation in the domain $|x|<r(t)$, so that $u(t,x)=v(t,x/r(t))$, where $v(t,y)$ is the solution of problem (\ref{Df3}). For $r(t)<|x|<5/2$
our e.s. $u=u(x)\le 0$ satisfies the equations $u_t=0$, for $1<|x|<2$ the values $u(x)=-\psi(|x|)$ uniquely determined by Rankine-Hugoniot relation (\ref{DfRG}) on the line $x=r(t)$: $\psi(r)r'+u_x(t,r)=0$, so that
$\psi(r(t))=-u_x(t,r(t))/r'(t)=-v_y(t,1)/(r(t)r'(t))>0$. In view of estimate (\ref{Df10}) $\psi(x)\in L^\infty((1,2))$.
Finally, in the strips $2<|x|<5/2$ we set $u=0$. By the periodicity, the e.s. $u(t,x)$ is then extended on the whole half-plane $\Pi$, see figure~\ref{fig1}. By the construction, $u(t,x)$ satisfies all requirements (\ref{Df2a}), (\ref{DfRG}), (\ref{DfEI}) on the discontinuity lines $x=\pm r(t)$, Hence, it is an e.s. of the Cauchy problem for equation (\ref{Df1}) with $5$-periodic initial data
$$
u(0,x)=p(x)=\left\{\begin{array}{lcr} \phi(x) & , & |x|<1, \\ -\psi(|x|) & , & 1<|x|<2, \\ 0 & , & 2<|x|<5/2. \end{array}\right.
$$
Notice that
\begin{align*}
2\int_1^2\psi(x)dx=2\int_0^\infty\psi(r(t))r'(t)dt=-\int_0^\infty (u_x(t,r(t))-u_x(t,-r(t)))dt=\\ -\int_0^\infty dt \int_{|x|<r(t)}u_{xx}(t,x)dx=-\int_{-2}^2 dx\int_{|x|<r(t)}u_t(t,x)dt=\int_{-1}^1\phi(x)dx,
\end{align*}
where we use that $u=0$ on the set $|x|=r(t)$. This equality implies that
$$m=\frac{1}{5}\int_{|x|<5/2} u(0,x)dx=\frac{1}{5}\left(\int_{-1}^1\phi(x)dx-2\int_1^2\psi(x)dx\right)=0.$$
Using (\ref{Df11}), we conclude that $u(t,x)$ satisfies the decay property
$$
\lim_{t\to+\infty} u(t,\cdot)=0 \ \mbox{ in } L^1((-5/2,5/2)).
$$
This is consistent with the statement of Theorem~\ref{thPD} since $m=0\in F$. But $(a,0)\cap F=\emptyset$ for each $a<0$ and the nonlinearity-diffusivity condition of Theorem~\ref{thM} is violated. Let us show that after a perturbation $p(x)+v(x)$, $v(x)\in L^\infty_0(\R)$, of initial data the decay property can fail. In fact, taking the perturbation $v(x)<0$ supported on the segment $[2,3]$, we find that the corresponded e.s.
is $u(t,x)+v(x)$ and it does not satisfy the decay property, see figure~\ref{fig2}.

\begin{figure}[h!]
\includegraphics[width=3in]{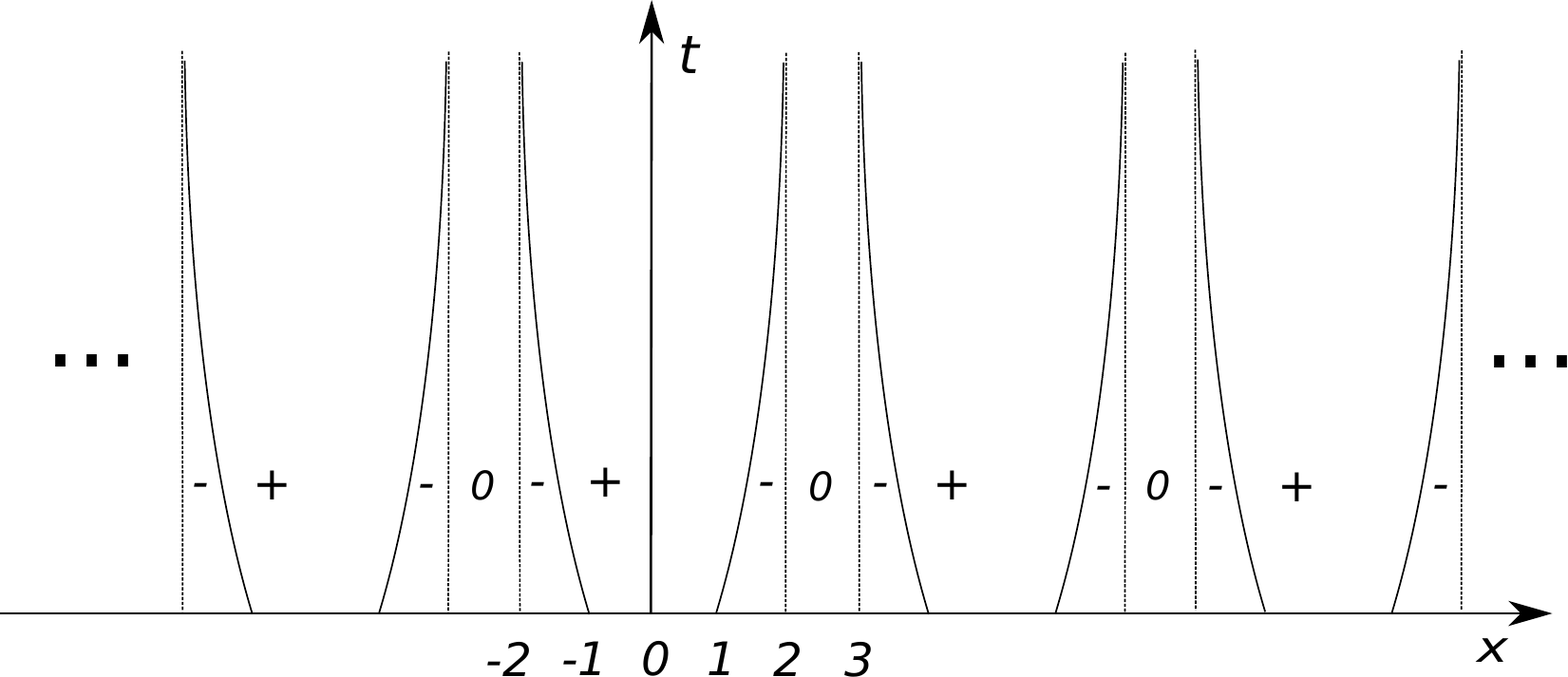}
\vspace*{4pt}
\caption{The solution $u(t,x)$ with the periodic initial data.
\label{fig1}}
\end{figure}

\begin{figure}[h!]
\includegraphics[width=3in]{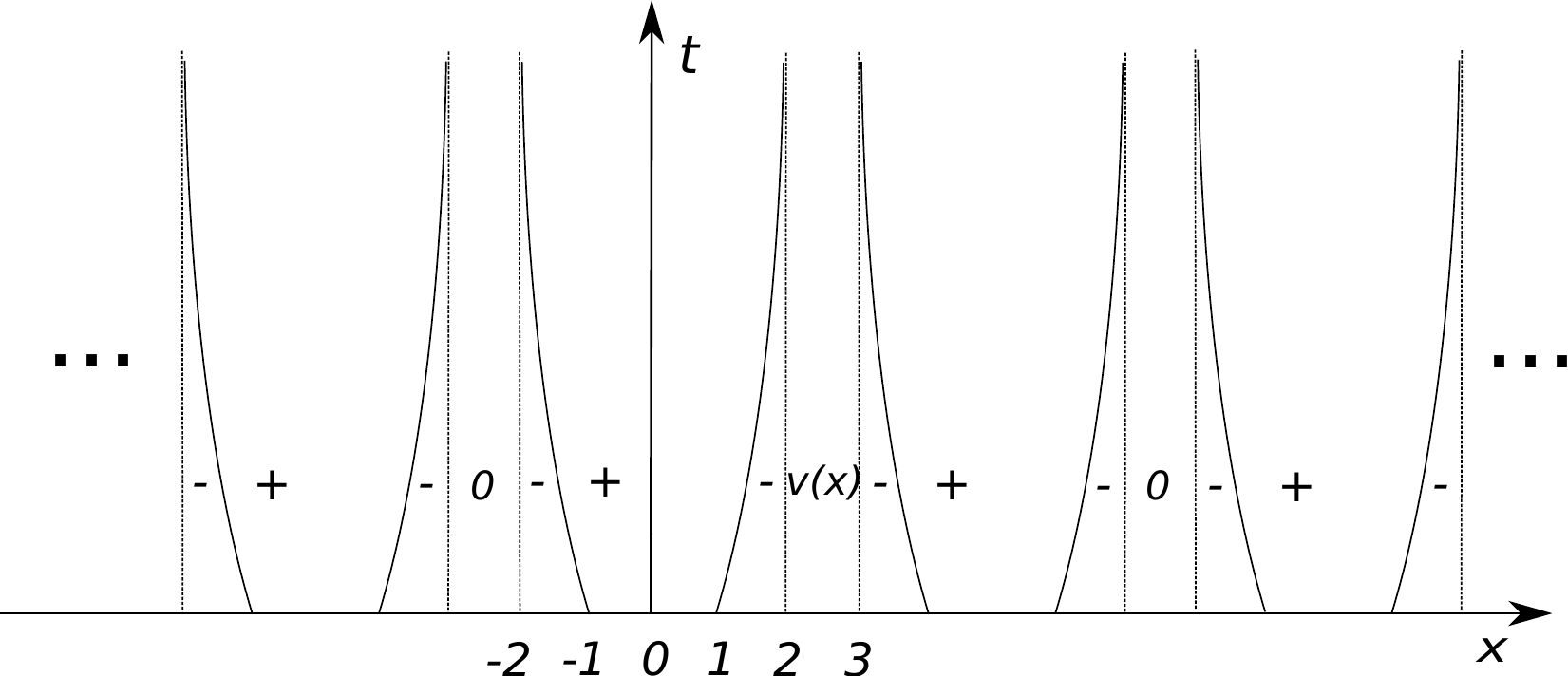}
\vspace*{4pt}
\caption{The solution $u(t,x)$ with the perturbed initial data.
\label{fig2}}
\end{figure}

\section*{Acknowledgments}\
This work was supported by the Russian Science Foundation, grant 22-21-00344.

\end{document}